\newtheorem{thm}{Theorem}[section]
\newtheorem{cor}[thm]{Corollary}
\newtheorem{prop}[thm]{Proposition}
\newtheorem{lem}[thm]{Lemma}
\theoremstyle{definition}
\newtheorem*{rem*}{Remark}
\newtheorem{conj}[thm]{Conjecture}
\setlist[enumerate]{itemsep=2ex, topsep=2ex} 
\setlist[itemize]{itemsep=2ex, topsep=2ex}
\newcommand{\E}{\mathbb{E}}
\newcommand{\ep}{\epsilon}
\newcommand{\lam}{\lambda}
\newcommand{\Del}{\triangle}
\renewcommand{\l}{\left}
\renewcommand{\r}{\right}
\newcommand{\half}{\frac{1}{2}}
\newcommand{\sm}{\setminus}
\newcommand{\sub}{\subseteq}
\renewcommand{\c}[1]{\mathcal{#1}}
\newcommand{\tr}[1]{\textrm{#1}}
\newcommand{\rec}[1]{\frac{1}{#1}}
\newcommand{\f}[2]{\frac{#1}{#2}}
\newcommand{\mr}[1]{\mathrm{#1}}
\newcommand{\ex}{\mr{ex}}
\title{Triangle-free Subgraphs of Hypergraphs}
\author{Jiaxi Nie \footnote{Dept.\ of Mathematics, UCSD {\tt jin019@ucsd.edu}}\and Sam Spiro\footnote{Dept.\ of Mathematics, UCSD {\tt sspiro@ucsd.edu}. This material is based upon work supported by the National Science Foundation Graduate Research Fellowship under Grant No. DGE-1650112.} \and  Jacques Verstra\"ete \footnote{Dept.\ of Mathematics, UCSD {\tt jverstra@math.ucsd.edu}}}
\date{\today}
\begin{document}
	\maketitle
	
	\begin{abstract}
In this paper, we consider an analog of the well-studied extremal problem for triangle-free subgraphs of graphs for uniform hypergraphs. A {\em loose triangle} is
a hypergraph $T$ consisting of three edges $e,f$ and $g$ such that $|e \cap f| = |f \cap g| = |g \cap e| = 1$ and $e \cap f \cap g = \emptyset$. We prove that if $H$ is an $n$-vertex $r$-uniform hypergraph with maximum degree $\triangle$, then as $\triangle \rightarrow \infty$, the number of edges in a densest $T$-free subhypergraph of $H$ is at least
\[ \frac{e(H)}{\triangle^{\frac{r-2}{r-1} + o(1)}}.\]
For $r = 3$, this is tight up to the $o(1)$ term in the exponent. We also show that if $H$ is a random $n$-vertex triple system with edge-probability $p$ such that $pn^3\rightarrow\infty$ as $n\rightarrow\infty$, then with high probability as $n \rightarrow \infty$, the number of edges in a densest $T$-free subhypergraph is
\[  \min\Bigl\{(1-o(1))p{n\choose3},p^{\frac{1}{3}}n^{2-o(1)}\Bigr\}.\]
We use the method of containers together with probabilistic methods and a connection to the extremal problem for arithmetic progressions of length three due to Ruzsa and Szemer\'{e}di.
	\end{abstract}
	
	\section{Introduction}

The {\em Tur\'{a}n numbers} for a graph $F$ are the quantities $\ex(n,F)$ denoting the maximum number of edges in an $F$-free $n$-vertex graph. The study
of Tur\'{a}n numbers is a cornerstone of extremal graph theory, going back to Mantel's Theorem~\cite{M} and Tur\'{a}n's Theorem~\cite{Turan}. A more general
problem involves studying $\ex(G,F)$, which is the maximum number of edges in an $F$-free subgraph of a graph $G$.  Some celebrated open problems
are instances of this problem, such as the case when $G$ is the $n$-dimensional hypercube -- see Conlon~\cite{Conlon} for recent results.

\medskip

In the case that $F$ is a triangle,
$\ex(G,F) \geq \frac{1}{2}e(G)$ for every graph $G$, which can be seen by taking a maximum cut of $G$, which is essentially tight. In the case $G = G_{n,p}$, the {\em Erd\H{o}s-R\'{e}nyi
random graph}, $\ex(G,F) \sim \frac{1}{2}p{n \choose 2}$ with high probability provided $p$ is not too small, and furthermore every maximum triangle-free subgraph
is bipartite -- see di Marco and Kahn~\cite{dMK} and also Kohayakawa, \L uczak and R\"{o}dl~\cite{KLR}
and di Marco, Hamm and Kahn~\cite{dMHK} for related stability results. The study of $F$-free subgraphs of random graphs when $F$ has chromatic number at least three is undertaken in seminal papers of Friedgut, R\"{o}dl and Schacht~\cite{FRS}, Conlon and Gowers~\cite{CG}, and Schacht~\cite{Sch}.

\subsection{Triangle-free subgraphs of hypergraphs}

In this paper, we consider a generalization of the problem of determining $\ex(G,F)$ when $F$ is a triangle to uniform hypergraphs. We write {\em $r$-graph} instead of $r$-uniform hypergraph. If $G$ and $F$ are $r$-graphs, then $\ex(G,F)$ denotes the maximum number of edges in an $F$-free subgraph of $G$. A {\em loose triangle} is a hypergraph $T$ consisting of three edges $e,f$ and $g$ such that $|e \cap f| = |f \cap g| = |g \cap e| = 1$ and $e \cap f \cap g = \emptyset$.
We write $T^r$ for the loose $r$-uniform triangle. The Tur\'{a}n
problem for loose triangles in $r$-graphs was essentially solved by Frankl and F\"{u}redi~\cite{FF}, who showed for each $r \geq 3$ that $\ex(n,T^r) = {n - 1 \choose r - 1}$ for $n$ large enough, with equality only for the $r$-graph $S_n^r$ of all $r$-sets containing a single vertex. We remark that the Tur\'{a}n problem for $r$-graphs is notoriously difficult in general, and the asymptotic behavior of $\ex(n,K_t^r)$ is a well-known open problem of Erd\H{o}s~\cite{Erd} -- the celebrated Tur\'{a}n conjecture states $\ex(n,K_4^3) \sim \frac{5}{9}{n \choose 3}$.

\medskip

The extremal problem for loose triangles is closely connected to the extremal problem for three-term arithmetic progressions in sets of integers. Specifically, Ruzsa and Szemer\'{e}di~\cite{RS} made the following connection. If $\Gamma$ is an abelian group
and $A \subseteq \Gamma$, define the tripartite linear triple system $H(A,\Gamma)$ whose parts are equal to $\Gamma$ and where
$(\gamma,\gamma + a,\gamma + 2a)$ is an edge if $a \in A$.  In other words, the edges are three-term progressions whose common difference is in $A$.  One can then see that $H(A,\Gamma)$ has $|A||\Gamma|$ edges and is triangle-free whenever $A$ has no three term arithmetic progression.  Ruzsa and Szemer\'{e}di~\cite{RS} showed that every $n$-vertex triangle-free linear triple system
has $o(n^2)$ edges, and applying this to $H(A,\Gamma)$ one obtains Roth's Theorem~\cite{Roth} that $|A| = o(|\Gamma|)$. A construction of Behrend~\cite{B}
gives in $\mathbb Z/n\mathbb Z$ a set $A$ without three-term progressions of size $n/\exp(O(\sqrt{\log n}))$, and so $H(A,\mathbb Z/n\mathbb Z)$ has $n^{2 - o(1)}$
edges in this case. Erd\H{o}s, Frankl, and R\"{o}dl~\cite{EFR} extended these ideas to $r$-uniform hypergraphs, giving the following.

\begin{thm}[Ruzsa and Szemer\'{e}di~\cite{RS}; Erd\H{o}s, Frankl, and R\"{o}dl~\cite{EFR}]\label{T-RS3}
For all $n$ there exists an $n$-vertex $r$-graph which is linear, loose triangle-free, and which has $n^{2 - o(1)}$ edges as $n\rightarrow\infty$.
\end{thm}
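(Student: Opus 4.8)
The plan is to extend the Ruzsa--Szemer\'edi tripartite system $H(A,\Gamma)$ to an $r$--partite one by replacing the three--term progression $(\gamma,\gamma+a,\gamma+2a)$ with the transversal $(\gamma+a,\gamma+2a,\dots,\gamma+ra)$, and to feed in a Behrend--type set that avoids not just three--term arithmetic progressions but a whole finite family of translation--invariant linear equations in three unknowns.

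Concretely: for large $N$ put $M_0=\floor{N/(4r)}$ and let $A\subseteq\{1,2,\dots,M_0\}$ come from Behrend's sphere construction~\cite{B} carried out in a base chosen large enough relative to $r$ to suppress carries for integer coefficients of absolute value at most $r$. Then $|A|\ge M_0\exp(-O(\sqrt{\log M_0}))=N^{1-o(1)}$, and $A$ contains no solution to any equation $\alpha u+\beta v=(\alpha+\beta)w$ with $\alpha,\beta,\alpha+\beta$ all nonzero, $|\alpha|,|\beta|\le r$, and $u,v,w$ not all equal: clearing carries turns such a solution into the same equation for the corresponding sphere vectors, and that equation exhibits one of $\vec u,\vec v,\vec w$ as a proper convex combination of the other two, forcing all three to coincide. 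Now define the $r$--graph $H$ on vertex set $\mathbb{Z}_N\times[r]$ (so $n:=rN$; for other values of $n$ adjoin at most $r-1$ isolated vertices) with edges $e_{x,a}=\{(x+ia,\,i):i\in[r]\}$ for $x\in\mathbb{Z}_N$ and $a\in A$. Distinct pairs $(x,a)$ yield distinct edges, so $e(H)=|A|\cdot N=N^{2-o(1)}=n^{2-o(1)}$.

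It remains to verify that $H$ is linear and $T^r$--free. If $e_{x,a}$ and $e_{x',a'}$ share vertices in two parts $i\ne j$, then $(i-j)(a-a')\equiv0\pmod N$, and since $|i-j|\,|a-a'|\le(r-1)M_0<N$ this gives $a=a'$ and hence $x=x'$, so $H$ is linear. Suppose $e_{x,a},e_{y,b},e_{z,c}$ formed a loose triangle. The vertex at which one of these edges meets a second differs from the vertex at which it meets the third (otherwise that vertex would lie in $e\cap f\cap g=\emptyset$), and distinct vertices of one edge lie in distinct parts since each edge is a transversal; doing this for all three edges yields distinct $p,q,s\in[r]$ with the three intersections lying in parts $p,q,s$ respectively. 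Reading off first coordinates gives $x-y\equiv p(b-a)$, $y-z\equiv q(c-b)$, $z-x\equiv s(a-c)\pmod N$, and summing these, $p(b-a)+q(c-b)+s(a-c)\equiv0\pmod N$; the left-hand side is an integer of absolute value at most $3rM_0<N$, so it vanishes. Rearranged, this reads $(p-q)b+(q-s)c=(p-s)a$, which has nonzero coefficients of absolute value at most $r$, and $a,b,c$ are pairwise distinct (if, say, $a=b$, then meeting in part $p$ forces $x=y$ and $e_{x,a}=e_{y,b}$, not a triangle). This contradicts the defining property of $A$, so $H$ has no loose triangle.

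The only delicate point is the choice of $A$: it must simultaneously avoid every equation $\alpha u+\beta v=(\alpha+\beta)w$ arising from differences $\alpha=p-q,\ \beta=q-s$ of the coefficients $1,\dots,r$, and this is exactly what Behrend's \emph{spherical} construction (as opposed to a generic progression--free set) provides once the base is taken large enough for those coefficients. The remainder is routine: keeping $M_0=\Theta(N)$ so that the relevant congruences modulo $N$ are honest identities over $\mathbb{Z}$, and noticing that the transversal structure of the edges forces any loose triangle into precisely the algebraic shape that $A$ was built to forbid.
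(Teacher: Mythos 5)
Your construction is correct and gives a complete, self-contained proof of Theorem~\ref{T-RS3}. The paper itself does \emph{not} prove this theorem: it cites Ruzsa--Szemer\'edi and Erd\H{o}s--Frankl--R\"odl and only sketches the $r=3$ construction $H(A,\Gamma)$ in the introduction. Your argument is the natural $r$-uniform extension of exactly that construction, so it is ``the same approach'' in spirit, with the details filled in.

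The one point that genuinely requires care -- and which you handle correctly -- is that for $r\ge 4$ the three congruences summed around a would-be loose triangle produce an identity $(p-q)b+(q-s)c=(p-s)a$ with $\{p,q,s\}$ an arbitrary $3$-subset of $[r]$, and these are not all $3$-term progressions. A generic $3$AP-free set would therefore not suffice; you correctly invoke the \emph{spherical} structure of Behrend's set, together with a base large enough to suppress carries for coefficients of size $O(r)$, so that any such solution lifts digitwise to a vector identity $\alpha\vec u+\beta\vec v=(\alpha+\beta)\vec w$ in which the variable whose coefficient has the odd sign is a proper convex combination of the other two, forcing $\vec u=\vec v=\vec w$ by strict convexity of the sphere. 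The size bookkeeping ($M_0=\Theta(N/r)$ so that all congruences mod $N$ are honest integer identities, and $|A|=M_0e^{-O_r(\sqrt{\log M_0})}$) is also correct, giving $e(H)=|A|\cdot N=n^{2-o(1)}$ for fixed $r$. The verification that distinct pairs $(x,a)$ give distinct edges, the linearity check, and the argument that $a,b,c$ are pairwise distinct (hence not all equal) in any putative loose triangle are all sound.

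One minor stylistic remark: you only need the closed family of coefficient triples $(\alpha,\beta,\alpha+\beta)=(p-q,\,q-s,\,p-s)$ with $|p-q|,|q-s|,|p-s|\le r-1$, so stating $|\alpha|,|\beta|\le r$ is slightly generous (and then $|\alpha+\beta|$ can reach $2r$, which you should mention when bounding the digit sums); but since you choose the base large enough ``relative to $r$,'' this does not affect correctness.
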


This theorem is an important ingredient for our first theorem, giving a general lower bound on the number of edges in a densest triangle-free subgraphs
of $r$-graphs:
	
\begin{thm}\label{T-Lower}
		Let $r \geq 3$ and let $G$ be an $r$-graph with maximum degree $\triangle$.  Then as $\triangle \rightarrow \infty$,
		\[\ex(G,T^r) \ge \triangle^{-\frac{r-2}{r-1} -o(1)} e(G).\]
\end{thm}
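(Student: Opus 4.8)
The plan is to find a triangle-free subgraph by a two-stage random selection, first sparsifying $G$ and then using a known Turán-type result to extract a large loose-triangle-free piece. Set $r\ge 3$ and let $\al = \al(\triangle)$ be a density parameter to be optimized; I will eventually take $\al = \triangle^{-\frac{r-2}{r-1}+o(1)}$. First I would keep each edge of $G$ independently with probability $q$ (for some $q$ to be chosen, or possibly work directly with $\al$) to obtain a random subgraph $G'$. The point of sparsification is that a ``copy'' of $T^r$ in $G$ uses three edges pairwise meeting in exactly one vertex; since the maximum degree is $\triangle$, the number of copies of $T^r$ through a fixed edge $e$ is $O(\triangle^3)$ — indeed each of the $r$ vertices of $e$ can be the intersection point with a second edge ($\triangle$ choices), and then the third edge is determined up to $O(\triangle)$ choices — so $\sum_{T^r \subseteq G}1 = O(\triangle^3 e(G))$ up to constants depending on $r$.

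Next, delete one edge from every remaining copy of $T^r$ in $G'$. In expectation $\E[e(G')] = q\, e(G)$, while the expected number of surviving copies of $T^r$ is $O(q^3 \triangle^3 e(G))$, so after deletions we retain in expectation at least $q\,e(G) - O(q^3\triangle^3 e(G)) = q\,e(G)(1 - O(q^2\triangle^3))$ edges, and choosing $q \asymp \triangle^{-3/2}$ already yields a $T^r$-free subgraph with $\gtrsim \triangle^{-3/2}e(G)$ edges. This crude deletion bound is not strong enough: it gives exponent $-3/2$ rather than $-\frac{r-2}{r-1}$, which for $r=3$ is $-\frac12$. The improvement must come from the Ruzsa–Szemerédi/Erdős–Frankl–Rödl construction, Theorem 1.2: a single vertex-neighborhood-type structure can already be $T^r$-free while being much denser than the naive deletion method suggests, so instead of deleting edges from triangles one should \emph{partition} the edge set cleverly. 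The better approach: take a random \emph{linear} subgraph first (keep edges so that no two share two or more vertices — this already costs only a $\triangle$-type factor since an edge meets $O(r^2\triangle)$ others in $\ge 2$ vertices), and then within the linear subhypergraph use the fact that the extremal $T^r$-free linear $r$-graphs have $n^{2-o(1)}$ edges. Concretely, one bounds the number of edges of $G$ that a \emph{maximum} $T^r$-free subgraph of a linear $r$-graph on $m$ edges must contain from below by an averaging/container argument: cover $V(G)$ by the Behrend-type construction embedded appropriately, or apply a result that every linear $r$-graph with $m$ edges and max degree $\triangle$ (hence on $\Omega(m/\triangle)$... ) contains a $T^r$-free subgraph of size $m/\triangle^{\frac{r-2}{r-1}+o(1)}$.

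The cleanest route I would actually carry out: use the hypergraph container / supersaturation philosophy. Since every $T^r$-free $r$-graph on $N$ vertices has at most $\binom{N-1}{r-1} = O(N^{r-1})$ edges (Frankl–Füredi), a random subset $S$ of $V(G)$ of size $N$ chosen so that $\E[e(G[S])] = e(G)\binom{N}{r}/\binom{n}{r} \asymp e(G)(N/n)^r$ — no, better to pick $S$ so that $G[S]$ has more than $\binom{N-1}{r-1}$ edges, forcing a copy of $T^r$; this does not directly help. So instead I embed $G$ into the Ruzsa–Szemerédi construction from the other direction: partition $E(G)$ according to a random homomorphism-like coloring derived from a $3$-AP-free set $A$ in an abelian group of size $\approx \triangle^{1/(r-1)}$, so that within each color class the edges form (essentially) a sub-configuration of $H(A,\Gamma)$ and are therefore $T^r$-free, and the largest class has $e(G)/|A| \ge e(G)\,\triangle^{-\frac{r-2}{r-1}-o(1)}$ edges by Behrend. \textbf{The main obstacle} is precisely making this coloring work on an \emph{arbitrary} bounded-degree $G$ rather than on the structured $H(A,\Gamma)$: one must produce a map from $V(G)$ (or from incidences) to $\Gamma$ under which every loose triangle of $G$ becomes ``rainbow'' or lands in a color forbidden by the $3$-AP-free condition, and controlling this for all $O(\triangle^3 e(G))$ loose triangles simultaneously is where a probabilistic argument (Lovász Local Lemma or alteration, using that $|\Gamma|$ is a small polynomial in $\triangle$) and the linearity reduction both enter. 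I expect the parameter balancing — trading the cost of passing to a linear subgraph against the gain $|A|^{-1} = \triangle^{-\frac{r-2}{r-1}-o(1)}$ from the Behrend set, with the exponent $\frac{r-2}{r-1}$ emerging from setting $|\Gamma|^{r-1} \asymp \triangle$ so that a typical vertex-degree-$\triangle$ edge spreads over $|\Gamma|$ classes — to be routine once the coloring is set up, and the Frankl–Füredi/EFR input (Theorems 1.2) to be exactly what certifies $T^r$-freeness of the color classes.
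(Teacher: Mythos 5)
You correctly identify the central ingredients — a random map $\chi:V(G)\to[t]$ into the Ruzsa--Szemer\'edi $T^r$-free linear $r$-graph $G_t$ of Theorem~\ref{T-RS3}, and the parameter choice $t^{r-1}\asymp\triangle$ — but the proposal has a genuine gap at exactly the point you flag as ``the main obstacle'': how to certify that the surviving edge set is actually $T^r$-free. Neither of your two candidate mechanisms closes it. The ``partition into $|A|$ color classes and take the largest'' reasoning does not describe what happens: a random $\chi$ sends an edge into $E(G_t)$ only with probability $e(G_t)r!/t^r\sim t^{2-r}$, not $1/|A|$, and the color classes are not $T^r$-free on their own because a loose triangle of $G$ can have all three edges collapse onto a \emph{single} edge of $G_t$ (or onto a degenerate near-triangle). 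The alteration route (keep edges mapping into $G_t$, then delete one edge from each surviving $T^r$) does work but gives a worse exponent: with $R(G)=O(\triangle^2 e(G))$ loose triangles, balancing $e(G)t^{2-r}$ against $R(G)t^{5-3r}$ forces $t\gtrsim\triangle^{2/(2r-3)}$ and yields $\ex(G,T^r)\gtrsim\triangle^{-2(r-2)/(2r-3)-o(1)}e(G)$, which is strictly weaker than $\triangle^{-(r-2)/(r-1)-o(1)}$ (e.g.\ $\triangle^{-2/3}$ vs.\ $\triangle^{-1/2}$ for $r=3$). The paper does use this deletion version later (Proposition~\ref{P-Alt}), but only where $R(G)$ is small; it does not prove Theorem~\ref{T-Lower}.

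The missing idea is a cheap \emph{deterministic} filter that replaces both the deletion step and your proposed local-lemma step. Define $G'$ to contain $e$ if and only if (1) $\chi(e)\in E(G_t)$ \emph{and} (2) for every $e'\in E(G)$ with $|e\cap e'|=1$ one has $\chi(e')\not\subset\chi(e)$. Condition~(2) rules out precisely the degenerate collapses: if $e_1,e_2,e_3$ formed a $T^r$ in $G'$, then by (1) all $\chi(e_i)$ are edges of the linear $r$-graph $G_t$, and (2) forces each pairwise intersection $\chi(e_i)\cap\chi(e_j)$ to have size exactly $1$ with the three intersection points distinct, so $\chi(e_1),\chi(e_2),\chi(e_3)$ would form a $T^r$ in $G_t$ — impossible. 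The cost of (2) is only a constant factor: an edge meets at most $r\triangle$ others in exactly one vertex, and $\Pr[\chi(e')\subset\chi(e)\mid(1)]=(r/t)^{r-1}$, so $\Pr[(2)\mid(1)]\ge(1-(r/t)^{r-1})^{r\triangle}=\Theta(1)$ once $t=r(r\triangle)^{1/(r-1)}$. Linearity of expectation then gives $\E[e(G')]\ge t^{2-r-o(1)}e(G)=\triangle^{-(r-2)/(r-1)-o(1)}e(G)$ with no deletion, no LLL, and no need to first pass to a linear subgraph of $G$.
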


If a positive integer $t$ is chosen so that ${t - 1 \choose r - 1} < \triangle \leq {t \choose r - 1}$ and $t|n$,
then the $n$-vertex $r$-graph $G$ consisting of $n/t$ disjoint copies of a clique $K_t^r$ has maximum degree at most $\triangle$
whereas
\[ \ex(G,T^r) = {t - 1 \choose r-1} \frac{n}{t} = \frac{r}{t} e(G) = O(\triangle^{-\frac{1}{r-1}}) \cdot e(G).\]
Here we used the result of Frankl and F\"{u}redi~\cite{FF} that $S_t^r$ is the extremal $T^r$-free subgraph of $K_t^r$ for $t$ large enough.
Therefore for $r = 3$, Theorem \ref{T-Lower} is sharp up to the $o(1)$ term in the exponent of $\triangle$. For $r \geq 4$, the best construction we have gives the following proposition:
	
\begin{prop}\label{P-DetLower}
	For $r \geq 4$ there exists an $r$-graph $G$ with maximum degree $\triangle$ such that as $\triangle \rightarrow \infty$,
\[ \ex(G,T^r)=O(\triangle^{-\frac{1}{2}}) \cdot e(G).\]
\end{prop}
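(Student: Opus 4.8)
The plan is to lift the $3$-uniform construction that witnesses sharpness of Theorem~\ref{T-Lower} for $r=3$, namely a disjoint union of cliques $K_t^3$, to an $r$-graph by padding each triple with its own set of $r-3$ fresh vertices; this keeps both the edge count and the size of a densest $T^r$-free subgraph essentially unchanged while dropping the maximum degree from $\Theta(t^{r-1})$ to $\Theta(t^2)$. Concretely, fix $r\ge4$ and take $t$ large (large enough for the Frankl--F\"uredi theorem~\cite{FF}). Let $G_3$ be a vertex-disjoint union of $N$ copies of $K_t^3$ (for any $N\ge1$), and form the $r$-graph $G$ by replacing each edge $\bar e\in E(G_3)$ with the $r$-set $\bar e\cup P_{\bar e}$, where the pad sets $P_{\bar e}$ are pairwise disjoint $(r-3)$-element sets of brand-new vertices, one per edge of $G_3$. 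The map $e\mapsto c(e):=e\cap V(G_3)$ is then a bijection from $E(G)$ to $E(G_3)$, so $e(G)=e(G_3)=N{t\choose3}$; each vertex of $V(G_3)$ keeps its $G_3$-degree, which is ${t-1\choose2}$ in every $K_t^3$, while each pad vertex has degree $1$, so $\triangle(G)={t-1\choose2}=:\triangle$.

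The key step I would prove is a projection lemma: if $G'\sub G$ is $T^r$-free, then the $3$-graph $G_3':=\{c(e):e\in E(G')\}\sub E(G_3)$ is $T^3$-free. Indeed, if $\bar e,\bar f,\bar g\in G_3'$ formed a copy of $T^3$, let $e',f',g'\in E(G')$ be the unique edges with these cores; since the pad sets are pairwise disjoint and disjoint from $V(G_3)$, we have $e'\cap f'=\bar e\cap\bar f$ and $e'\cap f'\cap g'=\bar e\cap\bar f\cap\bar g$, and likewise for the remaining two pairs, so $e',f',g'$ would be a copy of $T^r$ in $G'$, a contradiction.

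Granting the lemma, injectivity of $c$ gives $e(G')=|G_3'|\le\ex(G_3,T^3)$, and since a loose triangle is connected, Frankl--F\"uredi~\cite{FF} yields $\ex(G_3,T^3)=N{t-1\choose2}$, whence
\[ \frac{\ex(G,T^r)}{e(G)}\ \le\ \frac{N{t-1\choose2}}{N{t\choose3}}\ =\ \frac{3}{t}. \]
Since $\triangle={t-1\choose2}\le t^2/2$, i.e.\ $t\ge\sqrt{2\triangle}$, the right-hand side is at most $3/\sqrt{2\triangle}=O(\triangle^{-1/2})$, and $\triangle\to\infty$ as $t\to\infty$; a prescribed target maximum degree is met by taking $t$ maximal with ${t-1\choose2}\le\triangle$, which still forces $t=\Theta(\sqrt\triangle)$. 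The only step that needs genuine care is the projection lemma: padding with \emph{private} vertices can never create a loose triangle not already present among the underlying triples, which is exactly what sends a $T^r$ in $G$ back down to a $T^3$ in $G_3$. Everything else is bookkeeping relating $\triangle$, $t$, and the edge counts.
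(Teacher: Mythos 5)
Your proposal is correct, and it proves the proposition by a genuinely different route from the paper. The paper instead takes a partial Steiner system: by R\"odl's theorem there is an $r$-graph on $n$ vertices with $\Theta(n^3)$ edges in which every triple of vertices lies in at most one edge, hence $\triangle = \Theta(n^2)$; a direct deletion argument (repeatedly removing edges containing a pair covered by few edges, then exhibiting a $T^r$ among the many edges through each of the three pairs $v_1v_2, v_2v_3, v_1v_3$ of a surviving edge) shows any $T^r$-free subgraph has $O(n^2)$ edges, giving $\ex(G,T^r)=O(\triangle^{-1/2})e(G)$. Your construction instead lifts the sharp $r=3$ example -- a disjoint union of $K_t^3$'s -- to $r$-uniformity by attaching to each triple a private set of $r-3$ fresh vertices; the key projection lemma (that private padding preserves all pairwise intersections, so a $T^r$ in $G$ projects to a $T^3$ in $G_3$) is correct, the map $c$ is a bijection from $E(G)$ to $E(G_3)$, and the degree and edge-count bookkeeping ($\triangle=\binom{t-1}{2}$, $e(G)=N\binom{t}{3}$, $\ex(G,T^r)\le N\binom{t-1}{2}$ via Frankl--F\"uredi and connectedness of $T^3$) all checks out, yielding the same $O(\triangle^{-1/2})$ bound. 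What each buys: your approach is more conceptual and self-contained in that it reduces the problem cleanly to the already-understood $r=3$ case via a general padding trick; the paper's approach uses far fewer vertices (yours needs $\Theta(Nt^3)$ pad vertices), produces a construction in which every vertex sees many edges rather than mostly degree-$1$ pad vertices, and -- as the paper remarks -- can be upgraded to a regular $r$-graph by replacing R\"odl's theorem with the existence of Steiner systems (Keevash; Glock--K\"uhn--Lo--Osthus).
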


We leave it as an open problem to determine the smallest $c$ such that $\ex(G,T^r) \ge \triangle^{-c-o(1)} \cdot e(G)$ for
every graph $G$ of maximum degree $\triangle$. We conjecture the following for $r = 3$:

\begin{conj}
For $\triangle \geq 1$, there exists a triple system $G$ with maximum degree $\triangle$ such that as $\triangle \rightarrow \infty$, every $T^3$-free subgraph of $G$
has $o(\triangle^{-1/2}) \cdot e(G)$ edges.
\end{conj}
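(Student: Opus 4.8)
The plan is to take $G$ to be a random triple system and to read off the conjecture from the result on random triple systems stated in the abstract: if $pn^3\to\infty$ then with high probability
\[ \ex(G,T^3)=\min\Bigl\{(1-o(1))p\binom n3,\ p^{1/3}n^{2-o(1)}\Bigr\}, \]
where $G$ here is the random $n$-vertex triple system in which each of the $\binom n3$ triples appears independently with probability $p$. The point is that the clique is beaten by a ``flat'' random system: one chooses $p=p(n)\to 0$ but decaying slowly, and with $p\gg n^{-3/2}$, so that $G$ lives on $n=\omega(\triangle^{1/2})$ vertices, is very far from $T^3$-free, and the second term of the minimum is the relevant one. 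First I would record the routine concentration facts: with high probability $e(G)=(1+o(1))p\binom n3$, $\triangle:=\triangle(G)=(1+o(1))p\binom{n-1}2\to\infty$, and --- deleting a negligible fraction of edges from the $o(1)$-fraction of vertices of above-average degree --- we may take the maximum degree to equal exactly a prescribed value $\triangle=\Theta(pn^2)$. Since $\ex(\cdot,T^3)$ only decreases under edge deletion, it then suffices to bound $\ex(G,T^3)$ from above by the displayed formula.

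Plugging in, with high probability
\[ \frac{\ex(G,T^3)}{\triangle^{-1/2}\,e(G)}=\Theta\!\left(\frac{p^{1/3}n^{2-o(1)}}{(pn^2)^{-1/2}\cdot pn^3}\right)=\Theta\bigl(p^{-1/6}\,n^{-o(1)}\bigr), \]
so the whole matter reduces to making this tend to $0$. Here $p^{-1/6}\to\infty$ since $p\to 0$, but $p$ is chosen to decay slowly enough that $p^{-1/6}$ grows only sub-polynomially in $n$ (for instance $p^{-1/6}=(\log n)^{O(1)}$ for $p=1/\mathrm{polylog}(n)$, or $p^{-1/6}=e^{O(\sqrt{\log n})}$ for $p=e^{-\Theta(\sqrt{\log n})}$), while $n^{-o(1)}$ is the genuinely vanishing Ruzsa--Szemer\'edi--Behrend loss term coming from the theorem, of scale $e^{-\Theta(\sqrt{\log n})}$. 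Concretely, if the theorem is quantified as $\ex(G,T^3)\le p^{1/3}n^2\,e^{-c\sqrt{\log n}}$ whp, then taking $p=e^{-c'\sqrt{\log n}}$ with $c'$ a small enough positive constant (it suffices that $0<c'<6c$, and any such $p$ automatically satisfies $p\gg n^{-3/2}$) makes the ratio $\Theta\bigl(e^{(c'/6-c)\sqrt{\log n}}\bigr)\to 0$. Hence $\ex(G,T^3)=o(\triangle^{-1/2})\cdot e(G)$, which is the conjecture.

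The step I expect to be the real obstacle is precisely this last calibration: one must know that the ``$o(1)$'' in $\ex(G,T^3)=p^{1/3}n^{2-o(1)}$ corresponds to a truly vanishing loss factor (of Behrend scale, and behaving uniformly enough over the relevant range of $p$), rather than to a constant; extracting this explicit rate from the proof of the random triple system theorem and checking that the resulting window of admissible slow decay rates for $p$ is nonempty is where the care lies. It is essential that $p\to 0$ here: for $p$ bounded away from $0$ one has $n=\Theta(\triangle^{1/2})$ and $\ex(G,T^3)=\Theta(\triangle)=\Theta\bigl(\triangle^{-1/2}e(G)\bigr)$, recovering only the clique bound of the remark following Theorem~\ref{T-Lower}, while for $p\ll n^{-3/2}$ the system $G$ is essentially $T^3$-free and the ratio is $\Theta(1)$. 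Everything else --- the concentration estimates, the regime check $p\gg n^{-3/2}$, and the arithmetic above --- is routine.
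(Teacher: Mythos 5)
The statement you are addressing is explicitly marked a conjecture in the paper; there is no proof of it in the paper for you to match, and the authors leave it open (saying only that they believe the lower bound in Theorem~\ref{T-Main3plot} is ``perhaps closer to the truth'').

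Your proposal has a genuine gap, and you have in fact put your finger on exactly where it is without quite acknowledging that it is fatal. You need an \emph{upper} bound of the form $\ex(G_{n,p}^3,T^3)\le p^{1/3}n^2\cdot\eta(n)$ with $\eta(n)\to 0$, so that $p^{-1/6}\eta(n)\to 0$ in some window of slowly decaying $p$. But the quantity $p^{1/3}n^{2}e^{-c\sqrt{\log n}}$ that you invoke is the \emph{lower} bound of Theorem~\ref{T-Main3plot} (it comes from the Behrend/Ruzsa--Szemer\'edi construction via Corollary~\ref{C-rLow}); the container-method upper bound is $p^{1/3}n^{2+o(1)}$, and in the sharper range $n^{-3/2+\delta}\le p\le n^{-\delta}$ it is $p^{1/3}n^2(\log n)^c$ --- in both cases the extra factor \emph{grows}, it does not vanish. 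Concretely, you want $\ex(G,T^3)=o\bigl(\triangle^{-1/2}e(G)\bigr)=o\bigl(p^{1/2}n^2\bigr)$, while the available upper bound gives only $\ex(G,T^3)\le p^{1/3}n^2\cdot(\text{something}\ge 1)$, and
\[
\frac{p^{1/3}n^2\cdot(\text{something}\ge 1)}{p^{1/2}n^2}=p^{-1/6}\cdot(\text{something}\ge 1)\ \ge\ 1
\]
for every $p\le 1$. So the ratio cannot be shown to tend to $0$ from Theorem~\ref{T-Main3plot}; in fact it is \emph{bounded below by $1$} by that theorem's upper bound, and it grows as $p\to 0$. The abstract's shorthand ``$p^{1/3}n^{2-o(1)}$'' for $\ex(G_{n,p}^3,T^3)$ is a two-sided $n^{2\pm o(1)}$ sandwich, not an upper bound with a shrinking factor, and the two one-sided error terms are of genuinely different character.

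Put differently: proving the conjecture via $G=G_{n,p}^3$ would require establishing that the Behrend-type lower bound of Theorem~\ref{T-Main3plot} is essentially tight, which is precisely the open problem the authors flag after that theorem. There is also nothing to salvage by re-choosing the rate of $p$: since $p^{-1/6}\ge 1$ always and the container loss factor is $\ge 1$ always, no calibration of $p$ makes the product $o(1)$. Your observation that $p$ constant merely recovers the clique bound and that $p\ll n^{-3/2}$ trivializes the problem is correct, but the intermediate regime does not in fact close the gap.
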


\subsection{Triangle-free subgraphs of random hypergraphs}

	Our next set of results concern random hosts. To this end, we say that a statement depending on $n$ holds {\em asymptotically almost surely} (abbreviated a.a.s.) if the probability that it holds tends to 1 as $n$ tends to infinity.  Let $G_{n,p}^r$ denote random $r$-graph where edges of $K_n^{r}$ are sampled independently
	with probability $p$.  For the $r=2$ case we simply write $G_{n,p}$.
	
 A central conjecture of Kohayakawa, \L uczak and R\"{o}dl~\cite{KLR} was resolved independently by Conlon and
	Gowers~\cite{CG} and by Schacht~\cite{Sch}, and determines the asymptotic value of $\ex(G_{n,p},F)$ whenever $F$ has chromatic number at least three.
	The situation when $F$ is bipartite is more complicated, partly due to the fact that the order of magnitude of Tur\'{a}n numbers $\ex(n,F)$ is not known in general --
see F\"{u}redi and Simonovits~\cite{FS} for a survey of bipartite Tur\'{a}n problems. The case of even cycles was studied by Kohayakawa, Kreuter and Steger~\cite{KKS} and Morris and Saxton~\cite{MS} and complete bipartite graphs were studied by Morris and Saxton~\cite{MS} and by Balogh and Samotij~\cite{BS}.

\medskip

If $F$ consists of two disjoint $r$-sets, then $\ex(n,F)$ is given by the celebrated Erd\H{o}s-Ko-Rado Theorem~\cite{EKR}, and $\ex(n,F) = {n - 1 \choose r - 1}$.
A number of researchers studied $\ex(G_{n,p}^r,F)$ in this case~\cite{BBM}, with the main question being the smallest value of $p$ such that an extremal
$F$-free subgraph of $G_{n,p}^r$ consists of all $r$-sets on a vertex of maximum degree -- $(1 + o(1))p{n - 1 \choose r - 1}$ edges. The same subgraphs are also $T^r$-free, however the extremal subgraphs
in that case are denser and appear to be more difficult to describe. Our second main result is as follows:

	\begin{thm}\label{T-Main3plot}
For all $n \geq 2$ and $p = p(n)\le 1$ with $pn^3\rightarrow\infty$ as $n\rightarrow\infty$, there exist a constant $c>0$ such that asymptotically almost surely
\[\min\{(1-o(1))p\binom{n}{3},p^{\frac{1}{3}}n^2e^{-c\sqrt{\log n}}\}\le \ex(G_{n,p}^3,T^3) \le \min\{(1+o(1))p\binom{n}{3},p^{\frac{1}{3}}n^{2+o(1)}\},\]
and more accurately, for any constant $\delta>0$, when $n^{-3/2+\delta}\le p\le n^{-\delta}$, we have 
\[
\ex(G_{n,p}^3,T^3)\le p^{\frac{1}{3}}n^{2}(\log n)^c.
\]
\end{thm}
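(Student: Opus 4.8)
The plan is to prove the lower and upper bounds separately, with the lower bound being the easier half. For the lower bound, we distinguish two regimes according to which term in the minimum is smaller. When $p$ is small enough that $p^{1/3}n^2 e^{-c\sqrt{\log n}} \le (1-o(1))p\binom n 3$, i.e.\ when $p$ is polynomially bounded away from $1$, we take the triple system $H = H(A,\mathbb Z/m\mathbb Z)$ furnished by Behrend's construction and Theorem~\ref{T-RS3}, scaled so that it embeds into $K_n^3$; since it is loose triangle-free and has $n^{2-o(1)}$ edges, a standard first-moment/Chernoff argument shows that $G_{n,p}^3$ a.a.s.\ contains (most of) a copy of it, giving a $T^3$-free subgraph with $\ge p^{1/3} n^2 e^{-c\sqrt{\log n}}$ edges. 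When instead $(1-o(1))p\binom n 3$ is the smaller term (so $p$ is close to $1$), we simply use that a.a.s.\ $G_{n,p}^3$ itself, or a trivial modification of it, already has $(1-o(1))p\binom n3$ edges in a $T^3$-free subgraph — here one uses that the extremal $T^3$-free density in a dense host can be made $1-o(1)$ by deleting few edges; more carefully, one removes the edges lying in copies of $T^3$ and checks via the hypergraph container method (or a direct counting argument) that these form a $o(p\binom n3)$ fraction when $p\binom n3\to\infty$ is not too large. Either way, the lower bound follows from essentially soft arguments.

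The upper bound is the heart of the theorem, and the hard part will be the bound $\ex(G_{n,p}^3,T^3)\le p^{1/3}n^{2+o(1)}$ in the range where this term dominates. The plan is to invoke the hypergraph container method: the $T^3$-free subgraphs of $K_n^3$ are contained in a relatively small family of ``containers,'' each of which is itself a nearly-$T^3$-free triple system and hence, by the Ruzsa--Szemer\'edi / Frankl--F\"uredi supersaturation for loose triangles, has $n^{2+o(1)}$ edges (more precisely $o(n^3)$, upgraded to $n^{2+o(1)}$ using Theorem~\ref{T-RS3} and the removal-lemma machinery). Given a container $C$ with $e(C)\le n^{2+o(1)}$, the number of edges of $G_{n,p}^3$ inside $C$ is a.a.s.\ at most $\max\{2p\cdot e(C),\; e(C)^{?}\}$ by Chernoff — the point being that $p\cdot e(C) \le p n^{2+o(1)}$, which we must compare to the target $p^{1/3}n^{2+o(1)}$; these agree when $p = n^{o(1)}\cdot$(something), so one actually needs the sharper container statement that lets the container size shrink with $p$. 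The correct quantitative form: there is a family of at most $\exp\big(n^{2+o(1)}p^{2/3}\big)$-ish containers, each with $O(p^{-2/3}n^{2-o(1)})$... — rather, one iterates the container algorithm to fineness matching $p$, obtaining containers of size $O(p^{1/3}n^{2+o(1)}/p) = p^{-2/3}n^{2+o(1)}$ would be wrong dimensionally; the honest statement is that one tunes the container family so that $|\mathcal C|\le \exp(o(p n^3))$ and each $C\in\mathcal C$ has $e(C)\le p^{-2/3+o(1)}n^2$, and then a union bound over $\mathcal C$ combined with the Chernoff tail $\P[e(G_{n,p}^3\cap C)\ge p^{1/3}n^{2+o(1)}]\le \exp(-\Omega(p^{1/3}n^{2+o(1)}))$ closes the argument. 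The main obstacle is precisely calibrating the container iteration so that $\exp(o(pn^3))$ containers suffice while each has size $p^{-2/3+o(1)}n^2$; this requires a supersaturation estimate for loose triangles of the form ``a triple system on $n$ vertices with $\ge \beta n^2$ edges (for $\beta = p^{-2/3}$-ish, so growing) contains $\ge \beta^{3}n^{3-o(1)}$... '' copies, which must be extracted from the Ruzsa--Szemer\'edi bound together with a tensor/blow-up trick.

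For the first bound of the minimum, $\ex(G_{n,p}^3,T^3)\le (1+o(1))p\binom n3$, one notes this is trivial since $\ex(G_{n,p}^3,T^3)\le e(G_{n,p}^3) = (1+o(1))p\binom n3$ a.a.s.\ by Chernoff whenever $p\binom n3\to\infty$. Finally, the more accurate statement that $\ex(G_{n,p}^3,T^3)\le p^{1/3}n^2(\log n)^c$ for $n^{-3/2+\delta}\le p\le n^{-\delta}$ is obtained by the same container argument but bookkeeping the $n^{o(1)}$ factors explicitly: in this polynomial range of $p$ the container method and the Behrend-type supersaturation both lose only $\exp(O(\sqrt{\log n}))$ factors, which one absorbs into $(\log n)^c$; the constraint $p\le n^{-\delta}$ ensures $p^{1/3}n^2(\log n)^c \le (1-o(1))p\binom n 3$ so that this term is the relevant one, and $p\ge n^{-3/2+\delta}$ ensures $pn^3\to\infty$ polynomially so the container count $\exp(o(pn^3))$ genuinely dominates the polylog losses. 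I expect the write-up to spend most of its length on setting up the container lemma in the precise form needed and on the supersaturation input; the probabilistic union bounds are routine once those are in place.
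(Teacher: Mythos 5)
Your upper-bound plan is broadly in the same spirit as the paper's (iterated hypergraph containers together with a balanced-supersaturation input for $T^3$, followed by a union-bound/first-moment calculation), and your handling of the trivial term $\ex(G_{n,p}^3,T^3)\le (1+o(1))p\binom n3$ is correct. The paper's specific route is to first prove the counting statement (Theorem~\ref{Containers-3}) that bounds $N_3(n,m)$, the number of $T^3$-free $3$-graphs with $m$ edges, using the Balogh--Narayanan--Skokan balanced supersaturation lemma inside an iterated container argument, and then to apply Markov's inequality to the random variable $X_m$ counting $T^3$-free $m$-edge subgraphs of $G_{n,p}^3$, with $m=p^{1/3-o(1)}n^2$; this is essentially a cleaner packaging of the union bound you sketch. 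One concrete error in your write-up of the sharper statement: the losses in the container/supersaturation machinery are polylogarithmic, but an $\exp(O(\sqrt{\log n}))$ factor is \emph{not} of order $(\log n)^{O(1)}$, so your claim that the ``Behrend-type'' loss can be absorbed into $(\log n)^c$ is false. In fact Behrend plays no role in the upper bound at all; the $(\log n)^c$ comes purely from bookkeeping the polylog factors in the container iteration.

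The real gap is in your lower bound. Taking a fixed $n$-vertex loose-triangle-free linear triple system $H$ with $n^{2-o(1)}$ edges (Theorem~\ref{T-RS3}) and intersecting with $G_{n,p}^3$ gives only $p\cdot n^{2-o(1)}$ edges in expectation, and since $p\le p^{1/3}$ this is strictly \emph{weaker} than the claimed $p^{1/3}n^{2-o(1)}$ for all $p<1$. You cannot recover the $p^{1/3}$ exponent by intersecting with a fixed template on $n$ vertices. What the paper actually does (Proposition~\ref{P-Alt} and Corollary~\ref{C-rLow}) is choose a Ruzsa--Szemer\'edi hypergraph $G_t$ on $t$ vertices with $t\approx p^{2/3}\sqrt n$ (so $t$ depends on $p$), pick a uniformly random coloring $\chi:[n]\to[t]$, keep an edge $e$ of $G_{n,p}^3$ iff $\chi(e)\in E(G_t)$, and then delete one edge from each remaining copy of $T^3$. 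Linearity of $G_t$ controls which triples of retained edges can form loose triangles, the expected number of surviving edges is $e(G_{n,p}^3)\cdot t^{-1-o(1)}$ and the expected number of loose triangles among them is $R(G_{n,p}^3)\cdot t^{-4-o(1)}$, and optimizing $t$ yields $p^{1/3}n^2e^{-c\sqrt{\log n}}$ in expectation. Finally, the paper upgrades this expectation bound to an a.a.s.\ statement via Azuma's inequality on the edge-exposure martingale (Lemma~\ref{L-Azuma}); your proposal omits any concentration step, which is needed since Proposition~\ref{P-Alt} only controls the expectation. So you would need to replace your embedding argument with this random-homomorphism-plus-deletion scheme, and add a concentration step, before the lower bound would go through.
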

We believe that perhaps the lower bound is closer to the truth.

\medskip

Since $G_{n,p}$ for $p>n^{-2+o(1)}$ has maximum degree $\triangle \sim p{n - 1 \choose 2}$ asymptotically almost surely, Theorem \ref{T-Lower} only
gives $\ex(G_{n,p}^3,T^3) \geq p^{1/2 - o(1)}n^2$ a.a.s. The upper bound in Theorem \ref{T-Main3plot} employs the method of {\em containers} developed
by Balogh, Morris and Samotij~\cite{BMS} and Saxton and Thomason~\cite{ST}.

We do not have tight bounds for $\ex(G_{n,p}^r,T^r)$ in general for all $p$ and $r\ge4$.  Partial results and conjectures are discussed in the concluding remarks.

\subsection{Counting triangle-free hypergraphs}

Balogh, Narayanan and Samotij~\cite{BNS} showed that the number of triangle-free $n$-vertex $r$-graphs is $2^{\Theta(n^{r - 1})}$ using the method of containers. Note that
a lower bound follows easily by counting all subgraphs of the $r$-graph $S_n^r$ on $n$ vertices consisting of all $r$-sets containing
a fixed vertex. In this section, we adapt the methods to counting triangle-free hypergraphs with a specified number of edges.

\begin{thm}\label{Containers-3}
	Let $N_3(n,m)$ denote the number of $T^3$-free $3$-graphs with $n$ vertices and $m$ edges. Let $\ep(n)$ be a function such that $\f{\ep(n)\log n}{\log\log n}\rightarrow\infty$ as $n\rightarrow\infty$. Let $\delta=\delta(n)$ be a function such that $\ep(n)<\delta<1/2-\ep(n)$ and let $m=n^{2-\delta}$. Then
		$$
		N_3(n,m) \leq\l(\frac{n^2}{m}\r)^{3m + o(m)}.
		$$
	\end{thm}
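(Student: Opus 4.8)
The plan is to adapt the container framework of Balogh--Narayanan--Samotij~\cite{BNS} to count $T^3$-free triple systems of a \emph{prescribed} size: construct a small family $\mathcal C$ of triple systems covering every $T^3$-free one, each of size at most $n^{2+2\delta+o(1)}$, and then bound $N_3(n,m)\le\sum_{C\in\mathcal C}\binom{|C|}{m}$. Note that $m=n^{2-\delta}=o(\ex(n,T^3))$, so there is no supersaturation for the $m$-edge triple systems themselves; supersaturation must instead be applied to the (much larger, $\Omega(n^2)$-edge) containers.

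\emph{The container hypergraph.} Let $\mathcal H$ be the $3$-uniform hypergraph with $V(\mathcal H)=\binom{[n]}{3}$ whose hyperedges are the edge sets of the loose triangles in $K_n^3$; a triple system is $T^3$-free exactly when it is an independent set of $\mathcal H$. One computes $v(\mathcal H)=\binom n3$, $e(\mathcal H)=\Theta(n^6)$, maximum degree $\Theta(n^3)$, and --- since two triples meeting in exactly one vertex lie in $\Theta(n)$ loose triangles while any other pair lies in none --- $\Delta_2(\mathcal H)=\Theta(n)$ and $\Delta_3(\mathcal H)\le 1$. These bounds make the Balogh--Morris--Samotij~\cite{BMS} / Saxton--Thomason~\cite{ST} container lemma applicable: at a container $C$ with $|C|=\rho n^2$ triples it reduces the number of copies of $T^3$ inside $C$ by a constant factor using a fingerprint of size $O(\max\{n^2/\rho,\,n^{3/2}\})$ (the two terms coming from the $\Delta_2$ and $\Delta_3$ conditions).

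\emph{Supersaturation (the crux).} The key input is: there are constants $C,c>0$ such that every $n$-vertex triple system $G$ with $e(G)\ge Cn^2$ contains at least $c\,e(G)^3/n^3$ copies of $T^3$. When $e(G)$ is within a constant factor of $n^2$ this follows from the Frankl--F\"{u}redi bound $\ex(n,T^3)=\binom{n-1}{2}$~\cite{FF} together with the removal lemma of Ruzsa--Szemer\'{e}di~\cite{RS} / Erd\H{o}s--Frankl--R\"{o}dl~\cite{EFR}: such a $G$ is $\Omega(n^2)$-far from $T^3$-free, hence has $\Omega(n^3)$ copies. For general $e(G)=s\ge Cn^2$ one bootstraps by restricting to a uniformly random vertex set $W$ of size $w=\Theta(n^3/s)$, so that $\E\,e(G[W])=\Theta(w^2)$; a Paley--Zygmund argument, using $\mathrm{Var}(e(G[W]))=O\big((\E\,e(G[W]))^2\big)$ (which rests on $\sum_v d_G(v)^2\le\Delta(G)\cdot 3s\le\tfrac32 sn^2$), shows $e(G[W])\gg w^2$ with probability $\Omega(1)$, and averaging the base case over $W$ yields $\Omega(n^6/w^3)=\Omega(s^3/n^3)$ copies in $G$. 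Obtaining the correct \emph{cubic} dependence, rather than merely the $\Omega(n^3)$ that the removal lemma gives directly, is the main obstacle --- and is exactly what makes the container iteration terminate at essentially extremal size. (This is where the Roth / Ruzsa--Szemer\'{e}di connection enters the upper bound: supersaturation for loose triangles is governed by the same removal lemma.)

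\emph{Iteration and conclusion.} Now iterate the container lemma. Since supersaturation gives $e(\mathcal H[C])\ge c|C|^3/n^3$ whenever $|C|\ge Cn^2$, each step decreases the quantity $\big(e(\mathcal H[C])n^3/c\big)^{1/3}\ge|C|$ by a constant factor; we stop once $e(\mathcal H[C])<cn^{3+6\delta}$, which by supersaturation forces $|C|\le n^{2+2\delta+o(1)}$, and this takes only $O(\log n)$ steps (using $\delta<\tfrac12$). The accumulated fingerprint sizes $\sum_t\tau_t|C_t|$, with $|C_t|=\rho_t n^2$ decreasing geometrically, form a sum dominated by $O(n^{2-2\delta})$ together with the $O(\log n)$ terms of size $n^{3/2}$, so the resulting family $\mathcal C$ satisfies $\log|\mathcal C|=O\big((n^{2-2\delta}+n^{3/2}\log n)\log n\big)$. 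Here the hypotheses on $\delta$ and $\ep$ do their work: $\delta<\tfrac12-\ep(n)$ makes $n^{3/2}\log^2 n=o\big(m\log(n^2/m)\big)$, while $\delta>\ep(n)$ with $\ep(n)\log n/\log\log n\to\infty$ gives $\delta\log n/\log\log n\to\infty$, so that $n^{-\delta}=o(\delta)$ and every polylogarithmic and $e^{O(m)}$ overhead is absorbed into $(n^2/m)^{o(m)}$; the upshot is $|\mathcal C|=(n^2/m)^{o(m)}$ and $|C|\le n^{2+2\delta}(\log n)^{O(1)}$ for every $C\in\mathcal C$. Since every $T^3$-free triple system with $m$ edges lies inside some $C\in\mathcal C$, and $n^{2+2\delta}/m=n^{3\delta}=(n^2/m)^3$, we conclude
\[
N_3(n,m)\le\sum_{C\in\mathcal C}\binom{|C|}{m}\le|\mathcal C|\cdot\binom{n^{2+2\delta}(\log n)^{O(1)}}{m}\le (n^2/m)^{o(m)}\Big(\tfrac{e\,n^{2+2\delta}(\log n)^{O(1)}}{m}\Big)^{m}\le\Big(\tfrac{n^2}{m}\Big)^{3m+o(m)}.
\]
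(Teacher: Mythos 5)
Your overall architecture -- iterate a hypergraph container lemma against a supersaturation estimate until containers shrink to size $n^{2+2\delta+o(1)}$, then sum $\binom{|C|}{m}$ over containers -- matches the paper's. The arithmetic at the end (absorbing the $e^{O(m)}$ and $n^{3/2}\log^2 n$ overheads into $(n^2/m)^{o(m)}$ via the hypotheses on $\delta$ and $\ep$) is also correct. But you have replaced the paper's key lemma with a different one, and that swap leaves a genuine gap.

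\textbf{The gap: you have a count, not a balanced family.} The paper feeds the BMS container lemma (the paper's Lemma~\ref{Contn}) with the \emph{balanced} supersaturation of Balogh--Narayanan--Skokan (Lemma~\ref{Saturtation}): for a container $C$ of density $t_0=e(C)/n^2$, that lemma produces a sub-hypergraph $H$ of $T^3$-copies inside $C$ satisfying \emph{simultaneously} $\bar d(H)\gtrsim t_0^3$ and $\triangle_1(H)\lesssim t_0^3$, $\triangle_2(H)\lesssim t_0$, $\triangle_3(H)\le 1$. In particular the ratio $\triangle_1(H)/\bar d(H)$ is bounded, which is exactly the $j=1$ hypothesis of Lemma~\ref{Contn}. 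Your supersaturation (removal lemma plus random restriction) gives only a \emph{count}: $e(\mathcal H[C])\gtrsim |C|^3/n^3$. It says nothing about the maximum degree of $\mathcal H[C]$. You then bound $\triangle_1(\mathcal H[C])$ by the ambient $\triangle_1(\mathcal H)=\Theta(n^3)$, which is far too crude once $C$ is sparse: with $|C|=\rho n^2$ and $e(\mathcal H[C])\approx \rho^3 n^3$ you get $\bar d(\mathcal H[C])\approx \rho^2 n$, so $\triangle_1/\bar d$ can be as large as $n^2/\rho^2$, which blows up for $\rho=o(n)$. The $j=1$ hypothesis of Lemma~\ref{Contn} then fails (it would force $l\lesssim \rho^3\ll v=\rho n^2$, so containers barely shrink). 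You notice only the $\triangle_2,\triangle_3$ conditions (``the two terms coming from the $\Delta_2$ and $\Delta_3$ conditions''), but the container lemma the paper quotes has a $j=1$ hypothesis you never verify.

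\textbf{Why this isn't cosmetic.} The issue is not that some formulation somewhere might drop the $j=1$ condition; it is that the property $\triangle_1(\mathcal H[C])=O(\bar d(\mathcal H[C]))$ is genuinely false in general for restricted hypergraphs of $T^3$-copies, and the container method cannot make a constant-factor dent in $e(\mathcal H[C])$ with a small fingerprint when most of that count sits on a few ``hot'' triples. The balanced-supersaturation lemma exists precisely to circumvent this: it extracts a subfamily of the $T^3$'s that is both dense and regular, at the cost of certifying fewer copies than the removal lemma would (roughly $\rho^4 n^2$ instead of your $\rho^3 n^3$), and the paper's Lemma~\ref{Containers-C} is built on that subfamily, not on $\mathcal H[C]$ itself. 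If you want to keep your (nicer, sharper) count-based supersaturation, you would still need a separate argument that a positive fraction of those copies can be taken with $\triangle_1$ bounded by a constant times the average degree -- which is essentially reproving a form of Lemma~\ref{Saturtation}. As written, the step ``these bounds make the container lemma applicable'' is the unjustified leap.
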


We note that an analog of Theorem~\ref{Containers-3} for graphs was proven by Balogh and Samotij~\cite{BS}.  The upper bound on $\ex(G_{n,p}^3,T^3)$ in Theorem \ref{T-Main3plot} will follow quickly from the bound on $N_3(n,m)$ in
Theorem \ref{Containers-3} by taking $m = p^{1/3 - o(1)}n^2$.

\section{Proofs of Theorem~\ref{T-Lower} and Proposition~\ref{P-DetLower}}
For graphs, Foucaud, Krivelevich and Perarnau~\cite{FKP} used certain random homomorphisms to obtain good lower bounds on $\ex(G,F)$.  We briefly summarize these ideas. Let $\c{M}(F)$ denote the family of graphs $F'$ with $e(F')=e(F)$ and which can be obtained from $F$ by identifying vertices.  Let $H$ be an $\c{M}(F)$-free graph with many edges, which we will use as a template for our subgraph of $G$.  Specifically, we take a random mapping $\chi:V(G)\to V(H)$ and then constructs a subgraph $G'\sub G$ such that $uv\in E(G')$ if and only if $\chi(u)\chi(v)\in E(H)$ and such that $\chi(u)\chi(v)\ne \chi(u)\chi(w)$ for any other edge $uw\in E(G)$ (that is, we do not keep edges which are incident and map to the same vertex).  It turns out that $G'$ will be $F$-free because $H$ is $\c{M}(F)$-free, and in expectation $G'$ will have many edges provided $H$ does.

For general $r$-graphs, it is not immediately clear how to extend these ideas in such a way that we can both construct a subgraph with many edges and such that the subgraph is $F$-free.  Fortunately for $T^r$ we are able to do this.  In particular, for this case it turns out we can ignore the family $\c{M}(T^r)$ provided our template $r$-graph is linear.  This is where the Ruzsa-Szemeredi construction of Theorem~\ref{T-RS3} plays its crucial role.

\begin{proof}[Proof of Theorem~\ref{T-Lower}]
	Let $t$ be an integer to be determined later.  Let $\chi$ be a random map from $V(G)$ to $[t]$ and $G_t$ the $r$-graph from Theorem~\ref{T-RS3}.  For ease of notation define $\chi(e)=\{\chi(v_1),\ldots,\chi(v_r)\}$ when $e=\{v_1,\ldots,v_r\}$.  Let $G'$ be the subgraph of $G$ which contains the edge $e$ if and only if
	\begin{itemize}
	    \item[(1)]$\chi(e)$ is an edge of $G_t$, and
	    \item[(2)] $\chi(e')\not\subset\chi(e)$ for any $e'\in E(G)$ with $|e\cap e'|=1$.
	\end{itemize}
	
	We claim that $G'$ is $T^r$-free.  Indeed, let $T$ be a $T^r$ of $G'$, say with edges $e_1,e_2,e_3$ and $e_i\cap e_j=\{x_{ij}\}$ for $i\ne j$.  Because $G_t$ is linear, if $e,e'$ are (possibly non-distinct) edges of $G_t$, then $|e\cap e'|$ is either 0, 1, or $3$.  Note that $\chi(e_i),\chi(e_j)$ are edges of $G_t$ by (1).  Because $e_i\cap e_j=\{x_{ij}\}$ for $i\ne j$, $\chi(x_{ij})\in \chi(e_i)\cap \chi(e_j)$, and by (2) the size of this intersection is strictly less than $r$. Thus $\chi(e_i)\cap \chi(e_j)=\{\chi(x_{ij})\}$.  Further, we must have, say, $\chi(x_{ij})\ne \chi(x_{ik})$ for $k\ne i,j$. This is because (1) guarantees that $\chi(x)$ is a distinct element for each $x\in e_i$, so in particular this holds for $x_{ij},x_{ik}\in e_i$.  In total this implies $\chi(e_1),\chi(e_2),\chi(e_3)$ forms a $T^r$ in $G_t$, a contradiction.
	
	We wish to compute how large $e(G')$ is in expectation.  Fix some $e\in E(G)$.  The probability that $e$ satisfies (1) is exactly $e(G_t) r!/t^r$.  Let $\{e_1,\ldots,e_d\}$ be the edges in $E(G)$ with $|e_i\cap e|=1$.  Given that $e$ satisfies (1), the probability that $\chi(e_1)\not \subset \chi(e)$ is exactly $1-(r/t)^{r-1}$.  Note that for any $v\notin e\cup e_1$, the event $\chi(v)\in \chi(e)$ is independent of the event $\chi(e_1)\not\subset \chi(e)$, so we have \[\Pr[\chi(v)\in \chi(e)|\ e\tr{ satisfies }(1),\ \chi(e_1)\not\subset \chi(e)]=\frac{r}{t}.\]  On the other hand, if $v\in e_1\sm e$, then \[\Pr[\chi(v)\in \chi(e)|\ e\tr{ satisfies }(1),\ \chi(e_1)\not\subset \chi(e)]<\frac{r}{t},\] as knowing some subset containing $\chi(v)$ is not contained in $\chi(e)$ makes it less likely that $\chi(v)\in \chi(e)$.  By applying these observations to each vertex of $e_2\sm e$, we conclude that \[\Pr[\chi(e_2)\not \subset \chi(e)|\ e\tr{ satisfies }(1),\ \chi(e_1)\not \subset \chi(e)]\ge 1-\l(\frac{r}{t}\r)^{r-1}.\]  By repeating this logic for each $e_i$, and using that $e(G_t)=t^{2-o(1)}$, we conclude that \[\Pr[e\tr{ satisfies }(1),\ (2)]\ge \f{e(G_t)r!}{t^r}\l(1-\l(\frac{r}{t}\r)^{r-1}\r)^{r\triangle}=  t^{2-r-o(1)}\l(1-\l(\frac{r}{t}\r)^{r-1}\r)^{r\triangle}.\]
	
	By taking $t=r(r\triangle)^{1/(r-1)}$ and using that $(1-x^{-1})^x$ is a decreasing function in $x$, we conclude by linearity of expectation that \[\E[e(G')]\ge \triangle^{-1+\rec{r-1}-o(1)}\cdot e(G).\]
	
	In particular, there exists some $T^r$-free subgraph of $G$ with at least this many edges, giving the desired result.
\end{proof}

We close this section with a proof of Proposition~\ref{P-DetLower}.

	\begin{proof}[Proof of Proposition~\ref{P-DetLower}]
		By R\"{o}dl~\cite{RT}, there exists an $r$-graph $G$ with $\Theta(n^3)$ edges such that every three vertices is contained in at most one edge.  Let $G'$ be a $T^r$-free subgraph of $G$.  Define $G''$ by deleting every edge of $G'$ which contains two vertices that are contained in at most $2r$ edges.  Note that $e(G')-e(G'')\le 2r{n\choose 2}$.
		
		Assume $G''$ contains an edge $e=\{v_1,\ldots,v_r\}$.  Because $v_1,v_2$ are contained in an edge of $G''$, there exist a set $E_{12}\sub E(G')$ of at least $2r+1$ many edges containing $v_1$ and $v_2$.  As $G$ contained at most one edge containing $v_1,\ v_2$, and $v_3$, any $e_{12}\ne e$ in  $E$ does not contain $v_3$.  Because $v_2,v_3$ are contained in an edge of $G''$, there exists a set $E_{23}\sub E(G')$ of at least $2r+1\ge r+1$ edges containing $v_2,v_3$.  Because $G$ contains at most one edge containing $v_2,v_3,u_i$ for any $u_i\in e_{12}$, we conclude that there exists some $e_{23}\in E_{23}$ such that $e_{12}\cap e_{23}=\{v_2\}$.  Similarly we can find some $e_{13}\in E(G')$ such that $v_1,v_3\in e_{13}$ and such that $e_{13}\cap e_{12}=\{v_1\},\ e_{13}\cap e_{23}=\{v_3\}$.  These three edges form a $T^r$ in $G'$, a contradiction.  We conclude that $G''$ contains no edges, and hence $e(G')\le 2r{n\choose 2}$ for any $T^r$-free subgraph $G'$ of $G$.  As $G$ has maximum degree $\Del=\Theta(n^2)$, we conclude that $\ex(G,T^r)=O(n^2)=O(\Del^{-1/2})\cdot e(G)$.
	\end{proof}
	We note that one can replace the $G$ used in the above proof with an appropriate Steiner system to obtain a regular graph which serves as an upper bound.  It has recently been proven by Keevash~\cite{Keevash} and Glock, K{\"u}hn, Lo, and Osthus~\cite{GKLO} that such Steiner systems exist whenever $n$ satisfies certain divisibility conditions and is sufficiently large.
\section{Proof of Theorem~\ref{T-Main3plot}: Lower Bound.}

As noted in the introduction, the bound of Theorem~\ref{T-Lower} is sharp for $r=3$ by considering the disjoint union of cliques, so we can not improve upon this bound in general.  However, we are able to do better when $G$ contains few copies of $T^r$ by using a deletion argument.

\begin{prop}\label{P-Alt}
    Let $R(G)$ denote the number of copies of $T^r$ in the $r$-graph $G$.  Then for any integer $t\ge 1$,
    \[\ex(G,T^r)\ge (e(G) t^{2-r}- R(G) t^{5-3r})e^{-c\sqrt{\log t}}.\]
\end{prop}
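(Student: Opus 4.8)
The plan is to adapt the random-template argument of Theorem~\ref{T-Lower}, but this time performing a deletion step to remove the copies of $T^r$ that survive in $G'$, rather than avoiding them a priori via condition (2). This is why the bound degrades by a term proportional to $R(G)$.

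First I would take $\chi\colon V(G)\to[t]$ a uniformly random map and $G_t$ the linear, loose-triangle-free $r$-graph on $t$ vertices with $t^{2-o(1)}=t^2 e^{-c\sqrt{\log t}}$ edges from Theorem~\ref{T-RS3}. Let $G_1'\subseteq G$ consist of those $e\in E(G)$ with $\chi(e)\in E(G_t)$ \emph{and} with $\chi$ injective on $e$ (so $|\chi(e)|=r$). The probability that a fixed edge $e$ is kept is exactly $e(G_t)\,r!/t^r = t^{2-r}e^{-c\sqrt{\log t}}\cdot(\text{const})$, so by linearity $\E[e(G_1')]\ge e(G)\,t^{2-r}e^{-c\sqrt{\log t}}$ up to the constant, which one absorbs (or more carefully, re-scales $t$).

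Next I would bound the expected number of copies of $T^r$ in $G_1'$. The key point, exactly as in the proof of Theorem~\ref{T-Lower}, is that because $G_t$ is linear, a copy $\{e_1,e_2,e_3\}$ of $T^r$ in $G$ can only survive in $G_1'$ if the three ``special'' intersection vertices $x_{12},x_{23},x_{13}$ collapse under $\chi$: that is, $\chi(e_i)\cap\chi(e_j)$ is forced to have size $3$ rather than $1$ (it cannot equal $\{\chi(x_{ij})\}$, else $\chi$ would map $T$ to a loose triangle in $G_t$). Quantitatively, for $\{e_1,e_2,e_3\}$ to survive, on top of the three events $\chi(e_i)\in E(G_t)$ (probability $\le (r!\,e(G_t)/t^r)^3$, but the three edges share vertices so one must be a little careful) one needs enough coincidences among the $3r-3$ distinct vertices of $T$ that the image is \emph{not} a loose triangle; a crude bound gives that the surviving probability of a fixed copy is at most $e(G_t)^3 t^{-(3r-3)}\cdot t^{3} \cdot(\text{const}) = t^{5-3r}e^{-c\sqrt{\log t}}\cdot(\text{const})$ — the extra $t^3$ counts ways to choose which vertices collapse, and one loses $3r-3$ vertices worth of $1/t$ factors but the linearity-forced collapse recovers... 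I should double-check the exact exponent here, but $t^{5-3r}$ is what the statement demands, matching $e(G_t)^3\sim t^{6-o(1)}$ divided by roughly $t^{3r-1}$. Hence $\E[R(G_1')]\le R(G)\,t^{5-3r}e^{-c\sqrt{\log t}}$.

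Finally I would combine: $\E[e(G_1') - R(G_1')] \ge \bigl(e(G)t^{2-r} - R(G)t^{5-3r}\bigr)e^{-c\sqrt{\log t}}$, so there is a choice of $\chi$ with $e(G_1')-R(G_1')$ at least this value. Delete one edge from each copy of $T^r$ in $G_1'$ to obtain $G'\subseteq G_1'\subseteq G$ that is $T^r$-free with $e(G')\ge e(G_1')-R(G_1')$, giving the claimed bound. The main obstacle I anticipate is getting the exponent in the $R(G)$ term exactly right: one must carefully account for how the linearity of $G_t$ forces the special vertices of a surviving $T^r$ to collapse, and count the number of collapse patterns and the resulting probability, so that the three-edge overlap structure of $T^r$ yields precisely $t^{5-3r}$ and not something weaker. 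The $e(G)$ term is routine (it is essentially step one of Theorem~\ref{T-Lower}), and the deletion step is immediate.
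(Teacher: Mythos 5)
Your overall plan matches the paper's proof exactly: take a uniformly random $\chi\colon V(G)\to[t]$, keep the edges whose images land in the linear triangle-free template $G_t$ from Theorem~\ref{T-RS3}, observe that linearity of $G_t$ forces any surviving $T^r$ to ``collapse,'' then delete one edge from each surviving copy and apply linearity of expectation. The structural observation you flag as the crux is indeed the crux (with the small slip that $\chi(e_i)\cap\chi(e_j)$ must have size $r$, not $3$, when $r>3$; you wrote $3$).

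The place where your sketch does not yet close is the probability that a fixed copy of $T^r$ survives in $G_1'$, and the issue is more than a bookkeeping nuisance. You estimate it as something like $e(G_t)^3\,t^{-(3r-3)}\cdot t^3$, i.e.\ you treat the three events $\chi(e_i)\in E(G_t)$ as roughly independent and then multiply by a collapse count. None of the candidate exponents you write down ($t^{12-3r}$, $t^{9-3r}$, $t^{7-3r}$) equals $t^{5-3r}$, and you say so yourself. The point you need to carry through is that your own structural claim already forces $\chi(e_1)=\chi(e_2)=\chi(e_3)$ to be a \emph{single} edge $f\in E(G_t)$. Once you use that, there is exactly one factor of $e(G_t)$ (the choice of $f$), and then all $3r-3$ vertices of $T$ must land inside that fixed $r$-set with each $\chi|_{e_i}$ a bijection onto $f$; this happens for at most a constant (depending only on $r$) number of assignments out of $t^{3r-3}$. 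Hence
\[
\Pr[T\ \text{survives}] \;\le\; \frac{C_r\, e(G_t)}{t^{3r-3}} \;\le\; C_r\, t^{5-3r},
\]
using $e(G_t)\le t^2$, so $\E[R(G_1')]\le C_r\, t^{5-3r} R(G)$. Combined with $\E[e(G_1')]= e(G_t)\,r!\,t^{-r}e(G)\ge t^{2-r}e^{-c\sqrt{\log t}}e(G)$ and the deletion step, this yields the claim. So: same route as the paper, correct outline and correct final exponent, but the computation of the survival probability as written does not produce $t^{5-3r}$ — replace the $e(G_t)^3$ by a single $e(G_t)$ and the argument is complete.
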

\begin{proof}
Let $\chi$ be a random map from $V(G)$ to $[t]$ and $G_t$ the $r$-graph from Theorem~\ref{T-RS3}.  For ease of notation, if $e=\{v_1,\ldots,v_r\}$ we define $\chi(e):=\{\chi(v_1),\ldots,\chi(v_r)\}$.  Let $G'$ be the subgraph of $G$ which contains the edge $e$ if and only if $\chi(e)$ is an edge of $G_t$.
	
We claim that $e_1,e_2,e_3\in E(G')$ form a $T^r$ in $G'$ if and only if $e_1,e_2,e_3$ form a $T^r$ in $G$ and $\chi(e_1)=\chi(e_2)=\chi(e_3)$ is an edge of $G_t$.  Indeed, the backwards direction is clear. Assume for contradiction that these edges form a $T^r$ in $G'$ and that $e_1\ne e_2$.  Let $x_{ij}$ for $i\ne j$ be such that $e_i\cap e_j=\{x_{ij}\}$.  Because $G_t$ is linear, if $e,e'$ are (possibly non-distinct) edges of $G_t$, then $|e\cap e'|$ is either 0, 1, or $r$.  Because each $e_i$ is in $E(G')$, we have $\chi(e_i)\in E(G_t)$ by construction.  In particular, as $e_1\cap e_2=\{x_{12}\}$ and $\chi(e_1)\ne \chi(e_2)$, we must have $\chi(e_1)\cap \chi(e_2)=\{\chi(x_{12})\}$. As $e_3$ contains elements in and not in $e_1$ (namely $x_{13}$ and $x_{23}$), we must have $\chi(e_1)\cap \chi(e_3)=\{\chi(x_{13})\}$.  Similarly we have $\chi(e_2)\cap \chi(e_3)=\{\chi(x_{23})\}$.  Because $\chi(e_i)$ is an $r$-set for each $i$, we have $\chi(x_{ij})\ne \chi(x_{ik})$ for $\{i,j,k\}=\{1,2,3\}$. Thus $\chi(e_1),\chi(e_2),\chi(e_3)$ form a $T^r$ in $G_t$, a contradiction.

Let $G''\sub G'$ be a subgraph obtained by deleting an edge from each $T^r$ of $G'$.  By construction $G''$ is $T^r$-free. We conclude by linearity of expectation that
\begin{align*}
\ex(G,T^r)&\ge \E[e(G'')]\ge \E[e(G')-R(G')]\\
&=\f{e(G_t) r!}{t^r} e(G)-\f{e(G_t) r!}{t^{3r-3}}R(G)\\ 
&\ge (e(G) t^{2-r}- R(G) t^{5-3r})e^{-c\sqrt{\log t}}.
\end{align*}
\end{proof}

\begin{cor}\label{C-rLow}
For any integer $r\ge 3$, and function $p=p(n)\le1$ such that $p^{2/(2r-3)}n\ge2$, we have
    \[\E[\ex(G_{n,p}^r,T^r)]\ge p^{\frac{1}{2r-3}} n^{2}e^{-c\sqrt{n}},\]
for some constant $c>0$.
\end{cor}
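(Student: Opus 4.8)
The plan is to derive Corollary~\ref{C-rLow} from Proposition~\ref{P-Alt} by choosing $t$ optimally and estimating $R(G_{n,p}^r)$ in expectation. First I would apply Proposition~\ref{P-Alt} with $G=G_{n,p}^r$; by linearity of expectation,
\[
\E[\ex(G_{n,p}^r,T^r)] \ge \bigl(\E[e(G_{n,p}^r)]\, t^{2-r} - \E[R(G_{n,p}^r)]\, t^{5-3r}\bigr)e^{-c\sqrt{\log t}},
\]
using that the deletion bound in Proposition~\ref{P-Alt} is linear in $e(G)$ and $R(G)$. Now $\E[e(G_{n,p}^r)] = p\binom{n}{r} = \Theta(p n^r)$. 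For $R(G_{n,p}^r)$, a copy of $T^r$ uses three edges spanning $3(r-1)$ vertices, so the number of potential copies of $T^r$ in $K_n^r$ is $\Theta(n^{3r-3})$, and each survives in $G_{n,p}^r$ with probability $p^3$; hence $\E[R(G_{n,p}^r)] = \Theta(p^3 n^{3r-3})$.

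Next I would choose $t$ so that the two terms are comparable: setting $\Theta(pn^r)t^{2-r} = \Theta(p^3 n^{3r-3})t^{5-3r}$, i.e. $t^{2r-3} = \Theta(p^2 n^{2r-3})$, gives $t = \Theta(p^{2/(2r-3)}n)$. The hypothesis $p^{2/(2r-3)}n\ge 2$ exactly guarantees $t\ge 2$, so this is a legal choice (and one can take $t$ to be the integer part, absorbing constant factors). With this $t$, the positive term is
\[
\Theta(pn^r)\cdot t^{2-r} = \Theta\bigl(p n^r \cdot p^{(2-r)\cdot 2/(2r-3)} n^{2-r}\bigr) = \Theta\bigl(p^{1 + (4-2r)/(2r-3)} n^{2}\bigr) = \Theta\bigl(p^{1/(2r-3)} n^2\bigr),
\]
since $1 + \frac{4-2r}{2r-3} = \frac{2r-3+4-2r}{2r-3} = \frac{1}{2r-3}$. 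The subtracted term is of the same order by the choice of $t$, so I need the constant in the positive term to strictly dominate; this is handled by taking $t$ a suitable constant multiple of $p^{2/(2r-3)}n$ (or rescaling) so that $e(G)t^{2-r}$ beats $2R(G)t^{5-3r}$ in expectation, leaving $\Omega(p^{1/(2r-3)}n^2)$ after subtraction.

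Finally I would bound the error factor $e^{-c\sqrt{\log t}}$. Since $t = \Theta(p^{2/(2r-3)}n) \le \Theta(n)$ (as $p\le 1$), we have $\sqrt{\log t} \le \sqrt{\log n}\,(1+o(1))$, which would only cost $e^{-c\sqrt{\log n}}$; the statement allows the much weaker $e^{-c\sqrt{n}}$, so this is comfortably absorbed (one may simply note $e^{-c\sqrt{\log t}}\ge e^{-c\sqrt{n}}$ for the relevant range, adjusting $c$). Combining these estimates yields $\E[\ex(G_{n,p}^r,T^r)]\ge p^{1/(2r-3)}n^2 e^{-c\sqrt{n}}$ for an appropriate constant $c>0$. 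The main obstacle is bookkeeping: ensuring the constant factors in the two competing terms of Proposition~\ref{P-Alt} are arranged so the difference is still $\Omega(p^{1/(2r-3)}n^2)$ rather than possibly negative — this is a routine but careful choice of the implied constant in $t$, and is the only genuinely delicate point, everything else being direct substitution.
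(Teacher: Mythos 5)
Your argument is correct and mirrors the paper's own proof: both apply Proposition~\ref{P-Alt} in expectation, use $\E[e(G_{n,p}^r)]=\Theta(pn^r)$ and $\E[R(G_{n,p}^r)]=\Theta(p^3n^{3r-3})$, and pick $t=\Theta(p^{2/(2r-3)}n)$ so the positive term dominates. In fact your choice of $t$ is the correct one; the paper's stated $t=p^{2/(2r-3)}n^{1/2}$ appears to be a typo (the extra $n^{-1/2}$ would make the subtracted term dominate), and likewise the $e^{-c\sqrt{n}}$ in the corollary statement should read $e^{-c\sqrt{\log n}}$, which is what the argument — and the subsequent application in Theorem~\ref{T-Main3plot} — actually uses, as you in effect note.
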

\begin{proof}
Note for $n\ge 4$ that $\E[e(G_{n,p}^r)]=p{n\choose r}\ge p n^{r}/(2r)^r\ge pn^r/r^{2r}$, and that $\E[R(G_{n,p}^r)]\le p^3 n^{3r-3}$.  Plugging these into the bound of Proposition~\ref{P-Alt} gives
\[\E[\ex(G_{n,p}^r,T^r)]\ge (p n^r t^{2-r}- p^3 n^{3r-3}t^{5-3r})e^{-c\sqrt{\log t}}.\]
Take $t=p^{2/(2r-3)} n^{1/2}$, we conclude for sufficiently large $n$ that
\[\E[\ex(G_{n,p}^r,T^r)]\ge p^{\frac{1}{2r-3}} n^{2}e^{-c\sqrt{\log n}}.\]
\end{proof}

To get the a.a.s. result of Theorem~\ref{T-Main3plot}, we use Azuma's inequality (See for example in Alon and Spencer~\cite{AS}) applied to the edge exposure martingale.

\begin{lem}\label{L-Azuma}
    Let $f$ be a function on $r$-graphs such that $|f(G)-f(H)|\le 1$ whenever $H$ is $G$ with exactly one edge added or deleted.  Then for any $\lam>0$, \[\Pr\l[|f(G_{n,p}^r)-\E[f(G_{n,p}^r)]|>\lam \sqrt{n\choose r}\r]<e^{-\frac{\lam^2}{2}}.\]
\end{lem}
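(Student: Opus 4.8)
The plan is to exhibit $f(G_{n,p}^r)$ as the last term of an edge-exposure (Doob) martingale and then quote Azuma's inequality. First I would fix an arbitrary enumeration $f_1,\ldots,f_N$ of the $N=\binom{n}{r}$ potential edges of $K_n^r$, and for $0\le i\le N$ let $\mathcal{F}_i$ be the $\sigma$-algebra generated by the indicator variables $\mathbf{1}[f_j\in G_{n,p}^r]$, $j\le i$. Setting $X_i=\E[f(G_{n,p}^r)\mid\mathcal{F}_i]$ gives a martingale $(X_i)_{i=0}^N$ with $X_0=\E[f(G_{n,p}^r)]$ (a constant, as $\mathcal{F}_0$ is trivial) and $X_N=f(G_{n,p}^r)$ (as $\mathcal{F}_N$ determines $G_{n,p}^r$ entirely).

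The only step requiring genuine care is verifying that the martingale increments satisfy $|X_i-X_{i-1}|\le 1$. Conditioning on a fixed outcome of the first $i-1$ exposures, write $a_1$ (resp.\ $a_0$) for the conditional expectation of $f(G_{n,p}^r)$ given that outcome together with $f_i$ present (resp.\ absent); then $X_{i-1}=p\,a_1+(1-p)\,a_0$, while $X_i\in\{a_0,a_1\}$. Coupling the remaining exposures $f_{i+1},\ldots,f_N$ and invoking the hypothesis that adding or deleting a single edge changes $f$ by at most $1$ yields $|a_1-a_0|\le 1$, and since $X_{i-1}$ is a convex combination of $a_0$ and $a_1$ it follows that $|X_i-X_{i-1}|\le|a_1-a_0|\le 1$.

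Finally I would apply Azuma's inequality to $(X_i)$ with per-step bound $c_i=1$ and $\sum_{i=1}^N c_i^2=N=\binom{n}{r}$, at deviation $t=\lambda\sqrt{\binom{n}{r}}$: each one-sided tail is bounded by $e^{-\lambda^2/2}$, and substituting $X_0=\E[f(G_{n,p}^r)]$ and $X_N=f(G_{n,p}^r)$ gives exactly the asserted estimate (the two-sided union bound costs only a harmless factor of $2$, irrelevant for the way this lemma is used in the proof of Theorem~\ref{T-Main3plot}). There is no real obstacle here: this is the textbook bounded-differences concentration for functions of independent coordinates, whose single mildly delicate ingredient is the coupling used to justify the Lipschitz bound on the increments.
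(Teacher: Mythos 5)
Your argument is correct and is precisely the one the paper intends: the paper does not spell out a proof of Lemma~\ref{L-Azuma}, but simply cites Azuma's inequality applied to the edge-exposure martingale, which is exactly what you carry out. The only blemish is one you already flag: the two-sided Azuma bound gives $2e^{-\lambda^2/2}$ rather than the stated $e^{-\lambda^2/2}$, a constant discrepancy (arguably a small slip in the lemma's statement) that is immaterial to its use in the proof of Theorem~\ref{T-Main3plot}.
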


\begin{proof}[Proof of Theorem~\ref{T-Main3plot}: Lower Bounds]
    Let $\ep(n)=e^{k\sqrt{\log n}}$, where $k>0$ is some large enough constant. For $p\le n^{-3/2}/\ep(n)$, it is not difficult to show that a.a.s. $G_{n,p}^3$ contains $o(pn^3)$ copies of $T^{3}$, and by deleting an edge from each of these loose cycles we see that $\ex(G^3_{n,p},T^3)=(1-o(1)p{n\choose 3}$ a.a.s.

   For $n^{-3/2}/\ep(n)\le p\le n^{-3/2}\ep(n)$, we do an extra round of random sampling on the edges of $G^r_{n,p}$ and keep each edge with probability $p':=\ep(n)^{-2}$. The $r$-graph we obtained is equivalent to $G^r_{n,pp'}$, with $pp'\le n^{-3/2}/\ep(n)$. Thus $\ex(G^3_{n,p},T^3)=(1-o(1))pp'{n\choose3}=(1-o(1))p{n\choose3}/\ep(n)^2)$ a.a.s. Using $p\ge n^{-3/2}/\ep(n)$, we conclude that $\ex(G^3_{n,p},T^3)\ge p^{1/3}n^{2}e^{-c\sqrt{\log n}}$ a.a.s. in this range.

    We now consider $p\ge n^{-3/2}\ep(n)$.  The bound in expectation follows from Corollary~\ref{C-rLow}.  To show that this result holds a.a.s., we observe that $f(G)=\ex(G,T^3)$ satisfies the conditions of Lemma~\ref{L-Azuma}.
    For ease of notation let $X_{n,p}=\ex(G_{n,p}^3,T^3)$ and let  $B_{n,p}=p^{1/3}n^{2}e^{-c\sqrt{\log n}}$ be the lower bound for $\E[X_{n,p}]$ given in Corollary~\ref{C-rLow}.  Setting $\lam=\half B_{n,p} {n\choose 3}^{-1/2}$ and applying Azuma's inequality, we find
    \begin{align*}\Pr\l[X_{n,p}<\f{1}{2}B_{n,p}\r]&\le \Pr\l[X_{n,p}-\E[X_{n,p}]< \lam {n\choose 3}^{\f{1}{2}}\r]\\ &\le \Pr\l[\l|X_{n,p}-\E[X_{n,p}]\r|<\lam{n\choose 3}^{\frac{1}{2}}\r]\le \exp(-\frac{\lam^2}{2}).\end{align*}

    Note that for $p\ge n^{-3/2}\ep(n)$ we have $\lam\ge e^{(k/3-c)\sqrt{\log n}}\rightarrow\infty$ as $n\rightarrow\infty$. So we conclude the a.a.s. result.
\end{proof}

\section{Containers}
The method of containers developed
by Balogh, Morris and Samotij~\cite{BMS} and Saxton and Thomason~\cite{ST} is a powerful technique that has been used to solve a number of combinatorial problems. Roughly, the idea is for a suitable hypergraph $H$ to find a family of sets $\c{C}$ which contain every independent set of $H$, and in such a way that $|\c{C}|$ is small and each $C\in \c{C}$ contains few edges.  For example, by letting $H$ be the 3-uniform hypergraph where each edge is a $K_3$ in some graph $G$, we see that independent sets of $H$ correspond to triangle-free subgraphs of $G$.  The existence of containers then allows us to better understand how these subgraphs of $G$ behave. 

 We proceed with the technical details of this approach. Given an $r$-graph $H=(V,E)$, let $v(H)=|V|$, $e(H)=|E|$, and let $\mathcal{P}(V)$ be the family of subsets of $V$. For $A$ a set of vertices in $H$, let $d(A)$ be the number of edges in $H$ that contain $A$.  Let $\bar{d}(H)$ be the average degree of $H$, and let $\triangle_j(H)=\max_{|A|=j}d(A)$. In order to establish our upper bounds, we need to use the following container lemma for hypergraphs:

    \begin{lem}[Balogh, Morris and Samotij~\cite{BMS}]\label{Contn}
Let $r,b,l\in\mathbb{N}$, $\delta=2^{-r(r+1)}$, and $H=(V,E)$ an $r$-graph such that
    $$
    \triangle_j(H)\leq \l(\frac{b}{v(H)}\r)^{j-1}\frac{e(H)}{l},\ j \in\{1,2,\dots,r\}.
    $$
     Then there exists a collection $\mathcal{C}$ of subsets of\ \ $V$ and a function $f:\mathcal{P}(V)\rightarrow\mathcal{C}$ such that:
    \begin{itemize}
        \item[$(1)$] For every independent set $I$ of $H$, there exists $S\subset I$ with $|S|\leq(k-1)b$ such that $I\subset f(S)$.
        \item[$(2)$] For every $C\in\mathcal{C}$, $|C|\leq v(H)-\delta l$.
    \end{itemize}
\end{lem}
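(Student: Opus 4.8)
The displayed statement is the hypergraph container lemma of Balogh, Morris and Samotij, so in the paper one simply invokes \cite{BMS}; nonetheless, here is the shape of the argument I would give. The plan is to produce $f$ and $\mathcal{C}$ by an explicit deletion algorithm generalizing the Kleitman--Winston/``Scythe'' procedure for graphs. Fix a linear order on $V$. Given an independent set $I$, maintain an ``available set'' $A$ (initially $V$) and a fingerprint $S\subseteq I$ (initially empty), and repeat the following while $A$ still spans enough edges of $H$: among the vertices of $A$, select a vertex $v$ of sufficiently large degree in the induced $r$-graph $H[A]$, breaking ties by the fixed order; if $v\in I$, add $v$ to $S$ and delete from $A$ both $v$ and every vertex of $A$ whose co-degrees with $v$ at some level $j\le r$ are too large for it to coexist with $v$ in an independent set; if $v\notin I$, just delete $v$. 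The point is that $S$ depends only on $I$, and running the identical procedure with only $S$ available --- using ``$v\in S$'' as a surrogate for ``$v\in I$,'' which is consistent because any $v\in I$ that gets selected is added to $S$ --- reconstructs the same sequence of sets $A$. Hence the terminal available set, together with $S$, is a set $C=f(S)$ depending only on $S$ and satisfying $I\subseteq f(S)$; we let $\mathcal{C}$ be the collection of all such $f(S)$ and extend $f$ off the fingerprints arbitrarily.

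Two estimates then have to be checked. For $(1)$: each selection step deletes at least $v$, and the degree hypothesis $\triangle_j(H)\le(b/v(H))^{j-1}e(H)/l$ forces, at each ``co-degree scale'' $j\in\{1,\dots,r\}$, that only about $b$ vertices can enter the fingerprint before the number of surviving edges falls below the stopping threshold; summing over the $r-1$ nontrivial scales gives $|S|\le(r-1)b$ (the ``$k$'' in the displayed conclusion should read ``$r$''). For $(2)$: when the algorithm halts, $H[A]$ spans few edges, and --- because each step that enlarged the fingerprint deleted a controlled positive fraction of the current vertex set, and this occurs at each of the $r$ scales --- the terminal $A$ has lost at least $\delta l$ vertices, where $\delta=2^{-r(r+1)}=(2^{-(r+1)})^{r}$ records a loss of roughly a $2^{-(r+1)}$ factor per scale; hence $|f(S)|\le|C|\le v(H)-\delta l$.

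The delicate part --- the reason this is a genuine theorem rather than bookkeeping --- is designing the deletion rule so that it simultaneously controls the co-degrees $d(A')$ for \emph{every} $|A'|\le r$: in the graph case $r=2$ one tracks only ordinary degrees, but for $r$-graphs deleting a high-degree vertex perturbs the link structure at every level, so the rule and its accounting must be arranged so that progress at one level is not silently undone at another. Making the two estimates above close at once, with the clean constants $(r-1)b$ and $2^{-r(r+1)}$, is exactly where the scale-by-scale analysis (equivalently, an induction on $r$) of \cite{BMS} is needed, and I would follow that analysis essentially verbatim rather than attempt a new route.
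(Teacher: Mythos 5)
You correctly identify that the paper does not prove this lemma but simply invokes it from Balogh, Morris and Samotij, and your sketch of the scythe-style fingerprint algorithm (select high-degree vertices, record the ones lying in $I$, reconstruct from the fingerprint alone, iterate scale by scale across the uniformities $j=1,\dots,r$) is a faithful summary of the BMS approach; your observation that the ``$k$'' in conclusion (1) is a typo for ``$r$'' is also right, as the bound in the source is $|S|\le (r-1)b$. Since the paper's ``proof'' is the citation itself, your proposal matches it.
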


We will use this container lemma to give an upper bound for $N_3(n,m)$, which we recall is the number of $T^3$-free 3-graphs with $n$ vertices and $m$ edges.  The idea is to consider the $3$-graph $H$ with $V(H)=E(K^{r}_n)$ and $E(H)$ consisting of $T^r$ in  $K^{r}_n$. Notice that the container lemma requires upper bounds for the maximum codegrees of the hypergraph. In order to meet this requirement, we will use a balanced-supersaturation lemma for $T^r$.
\begin{lem}[Balogh, Narayanan and Skokan~\cite{BNS}]\label{Saturtation}
   For every $r\geq3$, there exists $c=c(r)$ such that the following holds for all $n$. Given any $r$-graph $G$ on $[n]$ with $e(G)=tn^{r-1}$, $t\geq6(r-1)$, let $S=1$ if $r=3$ and $S=tn^{r-4}$ if $r\ge 4$.  Then there exists a $3$-graph $H$ on $E(G)$, where each edge of $H$ is a copy of $T^r$ in $G$, such that:
 \begin{itemize}
     \item[$(1)$]$\bar{d}(H)\geq c^{-1}t^{3}S^{2}$.
     \item[$(2)$] $\triangle_j(H)\leq ct^{5-2j}S^{3-j}$,for  $j=1,2$.
 \end{itemize}
\end{lem}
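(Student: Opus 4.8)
This is a result of Balogh, Narayanan and Skokan~\cite{BNS}; here is the plan of attack. Recall that $H$ lives on the vertex set $E(G)$ and its edges are copies of $T^r$ in $G$, and that we must produce a subfamily of these copies that is \emph{dense} ($\bar d(H)\gtrsim t^3S^2$, equivalently $e(H)\gtrsim t^4S^2n^{r-1}$) and \emph{spread out} ($\triangle_1(H)\lesssim t^3S^2$ and $\triangle_2(H)\lesssim tS$). The argument has three stages: clean $G$; count copies of $T^r$; and select a balanced subfamily.

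\textbf{Cleaning.} I would first delete, iteratively, every edge of $G$ meeting a vertex of degree below $\tfrac14 rtn^{r-2}$. Each round removes only a bounded fraction of the current edges, so this ends with $G'\subseteq G$, $e(G')\ge\tfrac12 e(G)$, in which all degrees are $\Theta(tn^{r-2})$; after this one renames the density parameter, changing it by a constant. One also deletes the bounded fraction of edges lying in ``abnormally heavy'' codegree configurations, so that the codegrees that enter the count of $T^r$'s behave, up to constant factors, as the edge count predicts. The hypothesis $t\ge 6(r-1)$ is what keeps $G$ far from the Tur\'an-extremal regime, where copies of $T^r$ can disappear entirely (for instance, $S_n^r$ has density parameter $\approx 1/(r-1)!$, well below $6(r-1)$).

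\textbf{Supersaturation.} Next one shows that the cleaned $G$ contains at least $c^{-1}t^4S^2n^{r-1}$ copies of $T^r$. This follows from a greedy embedding of the loose triangle: choose an edge $e_1$ together with a distinguished pair of its vertices to play the roles of $x_{12},x_{13}$; choose an edge $e_2$ through $x_{12}$ meeting $e_1$ only there; choose a vertex $x_{23}\in e_2$ and an edge $e_3\supseteq\{x_{13},x_{23}\}$ meeting $e_1$ and $e_2$ in the prescribed way. The degree regularity from the cleaning lower bounds the number of choices at each step, and Jensen's inequality handles the averaging; tracking the number of ways to complete each of the three edges once its three distinguished vertices are fixed is what produces the factor $S^2$, where for $r\ge 4$ the quantity $S=tn^{r-4}$ is the typical number of edges of $G$ through a fixed $3$-set. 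For $r=3$ one has $S=1$ and this reduces to counting linear triangles, in the Ruzsa--Szemer\'edi circle of ideas.

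\textbf{Selection.} Finally one extracts the balanced subfamily. I would process the copies of $T^r$ found above in a carefully chosen order, keeping a copy only if doing so leaves every edge of $G$ in at most $ct^3S^2$ kept copies and every pair of edges in at most $ctS$ kept copies; the surviving family then satisfies $(1)$ and $(2)$ by construction, and setting $H$ equal to it gives $\bar d(H)=3e(H)/e(G)\gtrsim t^3S^2$ provided one shows $e(H)\gtrsim t^4 S^2 n^{r-1}$. That last inequality is the crux: one must show the selection discards at most a constant fraction of the copies, which uses both the cleaning and the exact form of the count, the point being that ``heavy'' edges and pairs can absorb only a bounded proportion of the total mass of copies. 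The main difficulty is making this work uniformly for all $t\ge 6(r-1)$: when $t$ is bounded, $t^3S^2$ and $tS$ are essentially constant and concentration inequalities are useless, so the easy route of random sparsification plus alteration (which suffices only for sufficiently large $t$) is unavailable and the selection must be analysed deterministically; moreover the bound $\triangle_2(H)\le ctS$ must be delivered by the selection itself, since the host codegree $\triangle_2(G)$ can greatly exceed $tS$ and cannot be reduced cheaply by cleaning. This robustness is precisely the technical heart of the Balogh--Narayanan--Skokan argument.
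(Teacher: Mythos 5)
The paper does not actually prove this lemma: it is quoted verbatim from Balogh, Narayanan and Skokan~\cite{BNS} and used as a black box, so there is no in-paper argument to compare yours against. Judged on its own terms, your outline correctly identifies the shape of the \cite{BNS}-style argument --- clean the host, prove supersaturation, then extract a balanced subfamily by a greedy selection capping each edge at $O(t^3S^2)$ copies and each pair of edges at $O(tS)$ --- and your parameter bookkeeping is consistent (e.g.\ $\bar d(H)\gtrsim t^3S^2$ is equivalent to $e(H)\gtrsim t^4S^2n^{r-1}$, and $S$ is indeed the typical number of edges through a $3$-set).

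As a proof, however, it has a genuine gap, and it sits exactly where you flag ``the crux.'' To show the selection discards only a bounded fraction of copies (equivalently, that the greedy process does not stall before reaching $\Omega(t^4S^2n^{r-1})$ copies), you need matching \emph{upper} bounds on the number of copies of $T^r$ containing a fixed edge, and containing a fixed pair of edges, of $G$; these require control of the codegrees $d(A)$ of $G$ for all $2\le |A|\le r-1$. Your proposed remedy --- delete the edges lying in ``abnormally heavy'' codegree configurations --- fails in general: take $G$ to be a union of $\Theta(n)$ pair-stars (all $r$-sets through a fixed pair), which has $e(G)=tn^{r-1}$ with $t=6(r-1)$ while \emph{every} edge contains a pair of codegree $\Theta(n^{r-2})\gg tn^{r-3}$, so no constant fraction of edges survives such a cleaning; yet the lemma must hold for this $G$. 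Handling heavy configurations inside the counting and selection, rather than cleaning them away, is the actual technical content of \cite{BNS}, and it is absent here. Two smaller slips: the degree threshold $\frac14 rtn^{r-2}$ is too aggressive, since iterated deletion below it can remove up to $\frac r4 e(G)>e(G)$ edges once $r\ge 4$ (the workable threshold is on the order of $e(G)/(2n)=\frac12 tn^{r-2}$); and no amount of edge deletion can make ``all degrees $\Theta(tn^{r-2})$'' --- cleaning yields only a minimum-degree bound, which is another reason the cap $\triangle_1(H)\le ct^3S^2$, like the cap on $\triangle_2$, must be delivered by the selection and not by any regularity of $G$.
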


Using the previous two lemmas, we derive the following container lemma for $T^3$-free hypergraphs. Similar result for $T^r$-free hypergraphs can also be obtained using the same idea, and we briefly comment on these results in the concluding remarks.
\begin{lem}\label{Containers-C}
 For any integer $n$ and positive number $t$ with $12\leq t\leq \binom{n}{3}/n^2$, there exists a collection $\c{C}$ of subgraphs of $K^{3}_n$ such that for some constant $c$:
 \begin{itemize}
     \item[$(1)$] For any $T^3$-free subgraph $J$ of $K^{3}_n$, there exists $C\in\c{C}$ such that $J\subset C$.
     \item[$(2)$] $|\mathcal{C}|\leq \exp\l(\f{c\log(t)n^2}{\sqrt{t}}\r)$.
     \item[$(3)$] For every $C\in\mathcal{C}$, $e(C)\leq tn^2$.
 \end{itemize}
\end{lem}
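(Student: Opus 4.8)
The plan is to obtain $\c{C}$ by iterating the Balogh–Morris–Samotij container lemma (Lemma~\ref{Contn}), using the balanced-supersaturation estimate of Lemma~\ref{Saturtation} to verify its codegree hypotheses at each step. Start with the ambient $3$-graph $H_0$ whose vertex set is $E(K_n^3)$ and whose edges are the copies of $T^3$ in $K_n^3$; here independent sets of $H_0$ are exactly $T^3$-free subgraphs of $K_n^3$. Given any subgraph $G\subseteq K_n^3$ with $e(G)\ge tn^2$, apply Lemma~\ref{Saturtation} with parameter $t$ (so $e(G)=t'n^2$ for some $t'\ge t\ge 12$, and $S=1$ since $r=3$): this produces a $3$-graph $H=H(G)$ on $E(G)$ with $\bar d(H)\ge c^{-1}t'^3$ and $\triangle_j(H)\le c\, t'^{\,5-2j}$ for $j=1,2$. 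I will want to check that these bounds let me take $b\asymp v(H)/\sqrt{t'}$ and $l\asymp e(H)/t'^{3/2}$ (up to constants depending on $c$) in Lemma~\ref{Contn}; concretely, $\triangle_1(H)=\triangle_j$ with $j=1$ gives $e(H)/l\gtrsim t'^3$, and $\triangle_2(H)\le c t'$ must be $\le (b/v(H))\cdot e(H)/l$, which with $e(H)=v(H)\bar d(H)/3\gtrsim v(H)t'^3$ forces $b/v(H)\gtrsim t'^{-2}$; since also we need the $j=1$ inequality $\triangle_1(H)\le e(H)/l$, i.e. $t'^3\gtrsim e(H)/(l\cdot)$, the binding constraint is on $l$, and a short computation shows the choice $b = v(H)/\sqrt{t'}$, $l = c' v(H)$ (equivalently $\delta l \asymp v(H)$) satisfies all $r=3$ hypotheses after absorbing constants. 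The lemma then yields, for each such $G$, a function $f_G$ and a family of containers, each a subgraph of $G$ of size at most $v(H)-\delta l = e(G) - \Omega(e(G)/\text{stuff})$; the key point is that each container has \emph{fewer} edges than $G$ by a multiplicative factor bounded away from $1$ depending only on $t$.

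Now iterate. Begin with the single ``container'' $K_n^3$. Repeatedly, for each current container $C$ with $e(C) > tn^2$, run the above to replace $C$ by its list of sub-containers $f_{C}(S)$ indexed by sets $S\subseteq C$ with $|S|\le 2b \le 2\binom{n}{2}/\sqrt{t}$ — wait, more carefully $|S| \le (k-1)b$ where $k$ is the uniformity, so $|S| = O(b) = O(n^2/\sqrt t)$. Property (1) of Lemma~\ref{Contn} ensures every $T^3$-free $J\subseteq C$ still lies in some new container, so the covering property is preserved through the iteration. Because each step shrinks the edge count by a constant factor, after $O(\log n)$ rounds every container has at most $tn^2$ edges, giving property (3); at this point we stop and declare the resulting family to be $\c{C}$, giving property (1).

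It remains to bound $|\c{C}|$, which is property (2) and the crux of the argument. At each round, a container $C$ spawns at most $\binom{e(C)}{\le O(n^2/\sqrt t)} \le \binom{\binom n3}{O(n^2/\sqrt t)} = \exp\!\bigl(O\bigl(\tfrac{n^2}{\sqrt t}\log n\bigr)\bigr)$ children, since $\log\binom{\binom n3}{s} = O(s\log n)$ for $s$ in our range. Multiplying over the $O(\log n)$ rounds gives $|\c{C}| \le \exp\!\bigl(O\bigl(\tfrac{n^2\log n\log n}{\sqrt t}\bigr)\bigr)$ — which is off by a $\log n$ factor from the claimed $\exp(c\log(t)n^2/\sqrt t)$. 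To fix this I would be more careful: track the ``fingerprint'' $S$ for the whole iteration as a single set (the union of the $S$'s across rounds), following the standard container bookkeeping, so that each $T^3$-free $J$ is specified by one set of total size $O(n^2/\sqrt t)$ together with the sequence of choices — the correct count is $\exp(O(n^2/\sqrt t \cdot \log t))$ because the relevant entropy is $\log\binom{\text{stuff}}{n^2/\sqrt t}$ measured against the \emph{shrinking} container sizes, whose product telescopes to produce $\log t$ rather than $\log n$. The main obstacle is precisely this: carrying out the iteration so that the accumulated fingerprint has size $O(n^2/\sqrt t)$ and the log-of-count comes out as $\log t$, not $\log n$ — i.e. matching the known graph analogue of Balogh–Samotij~\cite{BS} rather than losing a logarithm. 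I expect this to follow from the by-now-standard ``container process'' argument once the parameters $b,l$ above are pinned down, but it is the step requiring genuine care.
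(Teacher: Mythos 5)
Your overall strategy is the same as the paper's: build the $3$-graph $H$ on $E(G)$ whose edges are copies of $T^3$, verify the codegree hypotheses of Lemma~\ref{Contn} via the balanced-supersaturation Lemma~\ref{Saturtation}, and iterate the container lemma until every container has at most $tn^2$ edges. So the architecture is right, but there is a genuine gap in the entropy bookkeeping, which you yourself flag, and the fix you gesture at is not the one the paper uses.

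The gap is in the per-round count of children. You bound the fingerprint count by $\binom{\binom{n}{3}}{O(n^2/\sqrt t)}=\exp(O(n^2\log n/\sqrt t))$, using the fixed ambient $\binom{n}{3}$ and the fixed target $t$, and then multiply over $O(\log n)$ rounds, landing on $\exp(O(n^2(\log n)^2/\sqrt t))$. Both substitutions lose information. At a round where the current container $C$ has $e(C)=t_0n^2$ with $t_0\ge t$, the container lemma is invoked on the ground set $E(C)$ of size $t_0n^2$ (not $\binom n3$) with fingerprint size $\asymp n^2/\sqrt{t_0}$ (not $n^2/\sqrt t$), so the per-round count is
\[
\log\binom{t_0n^2}{O(n^2/\sqrt{t_0})}=O\Bigl(\tfrac{n^2}{\sqrt{t_0}}\log\bigl(t_0^{3/2}\bigr)\Bigr)=O\Bigl(\tfrac{n^2\log t_0}{\sqrt{t_0}}\Bigr),
\]
with $\log t_0$ rather than $\log n$ because the ratio of ambient to fingerprint is $t_0^{3/2}$. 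Since each round shrinks $t_0$ by a fixed factor $(1-\epsilon)$, the parameters $t_0,t_1,\dots,t_M$ form a geometric sequence decreasing to $t$, and because $\log x/\sqrt x$ is eventually decreasing in $x$, the sum $\sum_i n^2\log t_i/\sqrt{t_i}$ is dominated (up to a constant) by the last term $n^2\log t/\sqrt t$. This is how the paper gets $\exp(c\,n^2\log t/\sqrt t)$ with no extra $\log n$; no ``single cumulative fingerprint across all rounds'' is needed. Your proposed fix would require a different, more global container argument, whereas the paper simply keeps the per-round estimate tight and then telescopes.

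Two smaller issues: the parameter $b$ should be $\asymp n^2/\sqrt{t_0}=v(H)/t_0^{3/2}$, not $v(H)/\sqrt{t_0}$, which is off by a factor $t_0$ (you implicitly use the correct value later when you write $|S|=O(n^2/\sqrt t)$); and your constraint check for Lemma~\ref{Contn} only verifies $j=1,2$ but the binding constraint is $j=3$, namely $\triangle_3(H)=1\le (b/v(H))^2\,e(H)/l$, which forces $b\gtrsim v(H)t_0^{-3/2}=n^2/\sqrt{t_0}$. With $b\asymp n^2/\sqrt{t_0}$ and $l\asymp t_0n^2$ all three codegree conditions hold, $|C|\le v(H)-\delta l\le(1-\epsilon)t_0n^2$ gives the geometric edge decay, and the iteration terminates in $O(\log(n/t)/\epsilon)$ rounds as you describe.
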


\begin{proof}
By Lemma~\ref{Saturtation}, there exists a positive constant $c_1$ such that for any 3-graph $G$ on $[n]$ with $e(G)=t_0n^2$ and $t_0\geq t$, there exists a $3$-graph $H$ on $E(G)$ such that:
\begin{itemize}
    \item[(1)] Every edge of $H$ is a copy of $T^3$.
    \item[(2)]  $\bar{d}(H)\geq c_1^{-1}t_0^{3}$.
    \item[(3)] $\triangle_j(H)\leq c_1t_0^{5-2j},j=1,2$. $\triangle_3(H)=1$.
\end{itemize}

We can then use Lemma~\ref{Contn} on $H$ with $l=t_0n^2/{(3c_1^2)}$ and $b=n^2/\sqrt{c_1t_0}$ to get a collection $\c{C}$ of subgraphs of $G$ such that they contain all $T^3$-free subgraphs of $G$, and for each $C\in \c{C}$, $e(C)\leq (1-\epsilon)t_0n^2$ for some constant $\epsilon>0$. Also, we have
$$
|\c{C}|\leq\sum_{s=1}^{2b}\binom{tn^2}{s}\leq \exp\l(\frac{c_2\log(t_0) n^2}{\sqrt{t_0}}\r)
$$
for some constant $c_2>0$.

We use the above argument on $G=K^{3}_n$ to get a family of containers $\c{C}_1$. Notice that the containers of $\c{C}_1$ are also 3-graphs on $[n]$, so we can repeat this argument on each $C\in \c{C}_1$ with more than $tn^2$ edges to get a new collection of containers $\c{C}_2$.  We do this repeatedly until all containers have less than $tn^2$ edges. Since in each step the number of edges will decrease by a constant $(1-\epsilon)$, this process must stop after at most $\log{(n/t)}/\ep$ steps. For $k\ge 0$, define $t_{k+1}=(1-\epsilon)t_k$.  Let $M$ be the largest integer such that $t_M>t$. Because $t_0\le  \binom{n}{3}/n^2$, the number of containers we have in the end is less than
$$
\begin{aligned}
\prod_{i=0}^M\exp\l(\frac{c_2\log (t_i) n^2}{\sqrt{t_i}}\r)&=\exp\l(\sum_{i=0}^M\frac{c_2\log (t_i) n^2}{\sqrt{t_i}}\r)\\
&\leq\exp\l(\frac{c\log(t) n^2}{\sqrt{t}}\r)
\end{aligned}
$$
for some constant $c>0$.
\end{proof}

With the lemma above, we are ready to bound $N_3(n,m)$.
\begin{proof}[Proof of Theorem~\ref{Containers-3}]
Let $\c{C}$ be a collection of containers and $c$ a constant as in Lemma~\ref{Containers-C} with $t=n^{2\delta +\epsilon_1(n)}$, where $\epsilon_1(n)=\frac{2\log\log{n}}{\log{n}}$. By considering all subgraphs of each $C\in \c{C}$ with $m=n^{2-\delta}$ edges, and recalling that $\ep(n)<\delta<1/2-\ep(n)$ and ${n\choose k}\le (en/k)^k$, we conclude that for some suitable $\ep(n)$, 
 $$
\begin{aligned}
N_3(n,m)&\leq\exp\l(\frac{c\log (t) n^2}{\sqrt{t}}\r)\cdot\binom{tn^2}{m}\\
&\leq\exp\l(c\log t\cdot \f{m}{\log n}+\l(1+\l(3\delta+\epsilon_1(n)\r)\log{n}\r)m\r)\\
&\leq\exp\l(\delta\log {n}\cdot m\l(3+\l(2+o(1)\r)\frac{\log\log{n}}{\delta\log{n}}\r)\r)\\
&\leq \l(\frac{n^2}{m}\r)^{3m+o(m)}.\\
\end{aligned}
$$

\end{proof}

We are now ready to prove the upper bound of Theorem~\ref{T-Main3plot}
\begin{proof}[Proof of Theorem~\ref{T-Main3plot}: Upper Bound]
We will only present the proof of the upper in terms of $o(1)$ for the whole range. The proof of the more accurate upper bound in the smaller range is essentially the same, with more careful and explicit computation for the $o(1)$ factor. For $p\le n^{-3/2+o(1)}$, the proof for the upper bound is exactly the same as that for the lower bound. We now consider $n^{-3/2+\ep(n)}\le p\le n^{-\ep(n)}$ for some small function $\ep(n)=o(1)$. Our goal is to show
$$
\Pr[ex(G^3_{n,p},T^3)\geq m]\rightarrow0,\ as\ n\rightarrow\infty,
$$
for some $m=p^{1/3}n^{2+o(1)}$. Let $X_m$ be the expected number of $T^3$-free subgraphs in $G^3_{n,p}$ with $m$ edges. By Theorem~\ref{Containers-3}, when $n^{3/2+\ep_1(n)}\le m\le n^{2-\ep_1(n)}$ for some function $\ep_1(n)=o(1)$, there exist a function $\epsilon_2(n)=o(1)$ such that the expectation of $X_m$ satisfies
$$
\begin{aligned}
\mathbb{E}[X_m]&=N_3({n,m})\cdot p^m\\
&\leq \l(\frac{n^2}{m}\r)^{m\l(3+\epsilon_2(n)\r)}p^m\\
&=\l(\l(\frac{n^2}{m}\r)^{\l(3+\epsilon_2(n)\r)}p\r)^m.
\end{aligned}
$$
We can let $m=p^{1/3-\epsilon_3(n)}n^2$ for some small function $\epsilon_3(n)=o(1)$ such that
$$
\l(\frac{n^2}{m}\r)^{\l(3+\epsilon_2(n)\r)}p<1.
$$
Also we can pick some suitable $\ep(n)$, so that $n^{3/2+\ep_1(n)}\le m\le n^{2-\ep_1(n)}$. 
Thus we have $\mathbb{E}[X_m]\rightarrow 0$ as $n\rightarrow\infty$. Then by Markov's inequality, we have
$$
\Pr[ex(G^3_{n,p},T^3)\geq m]=\Pr[X_m\geq 1]\leq\mathbb{E}[X_m]\rightarrow0,\ as\ n\rightarrow\infty.
$$
So a.a.s. we have
$$ex(G^3_{n,p},T^3)<m=p^{\f{1}{3}}n^{2+o(1)}.$$
Finally for $p\ge n^{-o(1)}$, we have $\ex(G^3_{n,p},T)<\ex(K^3_n,T)=\Theta(n^2)=p^{1/3}n^{2+o(1)}$ a.a.s.
\end{proof}

	\section{Concluding Remarks}
\begin{itemize}
    \item  We are able to generalize Theorem \ref{Containers-3}
to $r$-graphs as follows:
	
	\begin{thm}\label{Containers-r}
		Let $N_r(n,m)$ denote the number of $T^r$-free $r$-graphs with $n$ vertices and $m$ edges. Let $r\geq 4$, $0 < \delta < 3/2$, and $m=n^{3-\delta}$. Then
		$$
		N_r(n,m) \leq\l(\frac{n^{r-1}}{m}\r)^{\l(1+\frac{2\delta}{3r - 12 + 3\delta}\r)m +o(m)}.
		$$
		\\
		When $r>4$, let $m=n^{3+\delta}$ with $\delta$ some constant satisfying $0<\delta<r-4$.  Then we have
		$$
		N_r(n,m) \leq\l(\frac{n^{r-1}}{m}\r)^{m+o(m)}.
		$$
	\end{thm}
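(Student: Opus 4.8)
The plan is to run the proof of Theorem~\ref{Containers-3} essentially verbatim, but feeding in the $r\ge4$ case of the balanced supersaturation Lemma~\ref{Saturtation}, where now $S=tn^{r-4}$ instead of $S=1$. The first step is to establish the $r$-uniform analogue of the iterated container Lemma~\ref{Containers-C}: for $n$ large and a parameter $t$ in a suitable range there is a family $\c{C}$ of subgraphs of $K_n^r$ covering every $T^r$-free subgraph, with $e(C)\le tn^{r-1}$ for each $C\in\c{C}$ and with $|\c{C}|\le\exp\l(c(r)\,n^3\log(n)/t^{3/2}\r)$. To get this, given a host $G$ with $e(G)=t_0n^{r-1}$ one applies Lemma~\ref{Saturtation} to produce a $3$-graph $H$ on $E(G)$ with $v(H)=t_0n^{r-1}$, $\bar{d}(H)\gtrsim t_0^3S^2$, $\triangle_j(H)\lesssim t_0^{5-2j}S^{3-j}$ for $j=1,2$, and $\triangle_3(H)\le1$, and then applies Lemma~\ref{Contn} to the $3$-uniform hypergraph $H$ with $l=\Theta(t_0n^{r-1})$ and $b=\Theta\l(n^3/t_0^{3/2}\r)$. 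The binding codegree constraint is the $j=3$ one: since $e(H)/l=\Theta(t_0^3S^2)$, the inequality $1=\triangle_3(H)\le(b/v(H))^2e(H)/l$ forces $b=\Theta\l(n^{r-1}/(\sqrt{t_0}\,S)\r)$, which equals $\Theta(n^3/t_0^{3/2})$ after substituting $S=t_0n^{r-4}$; the $j=1,2$ constraints are then satisfied automatically. One round of Lemma~\ref{Contn} shrinks the edge count by a factor $(1-\epsilon)$ with $\epsilon=\epsilon(r)>0$, so iterating $O(\log n)$ times starting from $K_n^r$ (where $t_0=\Theta(n)$) down to the target value $t$ and multiplying the per-step bounds $\exp(O(b_i\log n))$ over the geometric sequence $t_i$ yields the stated $|\c{C}|$.

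Given this lemma, bound $N_r(n,m)\le|\c{C}|\cdot\binom{tn^{r-1}}{m}\le\exp\l(c(r)\,n^3\log(n)/t^{3/2}\r)\l(\f{en^{r-1}t}{m}\r)^m$ and optimise over $t$. For $m=n^{3-\delta}$ with $0<\delta<3/2$ we have $n^{r-1}/m=n^{r-4+\delta}$; take $t=n^{2\delta/3+\epsilon_1(n)}$ with $\epsilon_1(n)=\Theta(\log\log n/\log n)$ and a large enough hidden constant. Then the container term is $c(r)\,m\,(\log n)^{1-\Theta(1)}=o(m)$, the term $m\log(et)$ equals $\l(\f{2\delta}{3}+o(1)\r)m\log n$, and the main term $m\log(n^{r-1}/m)$ equals $(r-4+\delta)m\log n$, so that
\[
\log N_r(n,m)\le\l((r-4+\delta)+\f{2\delta}{3}+o(1)\r)m\log n=\l(1+\f{2\delta}{3r-12+3\delta}+o(1)\r)m\log\l(\f{n^{r-1}}{m}\r),
\]
which is the first bound. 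For $m=n^{3+\delta}$ with $r>4$ and $0<\delta<r-4$ we have $n^{r-1}/m=n^{r-4-\delta}$ with positive exponent, and here it suffices to take $t$ equal to a large constant depending on $r$: then the container term is $O(n^3\log n)=o(m\log n)$ automatically since $\delta>0$, and $m\log(et)=O(m)$, so $\log N_r(n,m)\le(1+o(1))m\log(n^{r-1}/m)$, which is the second bound. Along the way one checks the parameter ranges are compatible: $t\ge6(r-1)$ holds for $n$ large, $t\le\binom{n}{r}/n^{r-1}=\Theta(n)$ holds because $\delta<3/2$ (resp.\ $t$ is constant), and $m\le tn^{r-1}$ holds because $r\ge4$ (resp.\ $\delta<r-4$).

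The step I expect to need the most care is the verification of the codegree hypotheses of Lemma~\ref{Contn} in the presence of the extra factor $S=tn^{r-4}$: one must pin down the correct order of $b$ from the $\triangle_3$ constraint and confirm that the $j=1,2$ constraints do not force $b$ to be larger, and then carry the various $o(1)$ error terms through the optimisation of $t$ so that they can all be absorbed as $+o(m)$ in the exponent. The iteration of the container lemma and the final counting are then routine adaptations of the $r=3$ arguments already carried out for Lemma~\ref{Containers-C} and Theorem~\ref{Containers-3}.
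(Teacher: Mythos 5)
The paper states Theorem~\ref{Containers-r} in the concluding remarks without proof, saying only that it generalizes Theorem~\ref{Containers-3} to $r$-graphs; your argument carries out exactly that generalization, with the parameter choices and bookkeeping done correctly. In particular your identification of the $j=3$ codegree constraint as the one forcing $b=\Theta(n^3/t_0^{3/2})$ (since $v(H)/b=t_0^{5/2}n^{r-4}$ after substituting $S=t_0n^{r-4}$), the check that $j=1,2$ are then slack, the geometric iteration giving $|\c{C}|\le\exp(O(n^3\log n/t^{3/2}))$, and the optimization $t=n^{2\delta/3+\Theta(\log\log n/\log n)}$ leading to the exponent $1+\tfrac{2\delta}{3r-12+3\delta}$ all check out, as does taking $t$ a large constant in the $m=n^{3+\delta}$, $r>4$ regime. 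This is the intended proof and it is correct.
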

	This bound will also leads to an upper bound for $\ex(G_{n,p}^r,T^r)$ when $n^{-r+3/2+o(1)}\le p\le 1$, which is essentially tight for $p=p(n)$ with $n^{-r+4+o(1)}\le p\le 1$. However, there is a gap between the lower bound and upper bound in the range $n^{-r+3/2+o(1)}\le p\le n^{-r+4+o(1)}$.
    \item Using the same techniques for the $r=3$ case, we are able to show the following.
	\begin{thm}\label{T-Mainrplot}
        For $r\ge 4$ and $0\le x\le r$ a constant, let $p=n^{-r+x}$ and define 
        \[f_r(x)=\lim_{n\rightarrow\infty}\log_n{\E[\ex(G^r_{n,p},T^r)}].\] 
        Then for $0\le x\le 3/2$, $f_r(x)=x$; for $4<x\le r$, $f_r(x)=x-1$; and for $3/2<x\le 4$, we have
        $$
        \max\{\f{x+3r-6}{2r-3},x-1\}\le f_r(x)\le \f{3x+3}{5}.
        $$
    \end{thm}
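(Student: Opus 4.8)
The plan is to derive the theorem range by range from the tools already assembled in the paper: the deletion argument, the star construction, Corollary~\ref{C-rLow}, the container bound of Theorem~\ref{Containers-r}, and the Frankl--F\"uredi value $\ex(K^r_n,T^r)=\binom{n-1}{r-1}$. For $0\le x\le 3/2$ only a lower bound is needed, since $\ex(G^r_{n,p},T^r)\le e(G^r_{n,p})$ forces $\E[\ex(G^r_{n,p},T^r)]\le p\binom nr=n^{x+o(1)}$. Here I would count copies of $T^r$: such a copy spans $3r-3$ vertices and three edges, so $\E[R(G^r_{n,p})]=\Theta(p^3n^{3r-3})=\Theta(n^{3x-3})$, which is $o(n^x)=o(\E[e(G^r_{n,p})])$ exactly when $x<3/2$; deleting one edge from each copy of $T^r$ then gives the pointwise bound $\ex(G^r_{n,p},T^r)\ge e(G^r_{n,p})-R(G^r_{n,p})$, hence $\E[\ex(G^r_{n,p},T^r)]\ge\E[e]-\E[R]=\Theta(n^x)$ and $f_r(x)=x$ on $[0,3/2)$. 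For the endpoint $x=3/2$ I would mimic the lower-bound proof of Theorem~\ref{T-Main3plot}: subsample $G^r_{n,p}$, keeping each edge with probability $n^{-\eta}$, apply the previous case to the resulting copy of $G^r_{n,q}$ with $q=pn^{-\eta}$, and let $\eta\to0$.

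For the lower bound when $x>3/2$, two constructions suffice. The star $S^r_n$ of all $r$-sets through a fixed vertex is $T^r$-free, so restricting it to $G^r_{n,p}$ gives $\E[\ex(G^r_{n,p},T^r)]\ge p\binom{n-1}{r-1}=\Omega(n^{x-1})$, i.e.\ $f_r(x)\ge x-1$ for every $x$. Independently, Corollary~\ref{C-rLow}---whose hypothesis $p^{2/(2r-3)}n\ge2$ holds precisely when $x>3/2$---gives $\E[\ex(G^r_{n,p},T^r)]\ge p^{1/(2r-3)}n^2\cdot n^{-o(1)}=n^{(x+3r-6)/(2r-3)-o(1)}$, so $f_r(x)\ge\frac{x+3r-6}{2r-3}$. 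On $(3/2,4]$ these two bounds cross (at $x=\frac{5r-9}{2(r-2)}$), which is why the lower bound takes the form $\max\bigl\{\frac{x+3r-6}{2r-3},\,x-1\bigr\}$; on $(4,r]$ the term $x-1$ is the larger of the two.

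For the upper bounds when $x>3/2$ I would feed $p=n^{-r+x}$ into Theorem~\ref{Containers-r}. Writing $X_m$ for the number of $T^r$-free subgraphs of $G^r_{n,p}$ with $m$ edges, $\E[X_m]=N_r(n,m)p^m$; since $\Pr[\ex(G^r_{n,p},T^r)\ge m]\le\E[X_m]$ and the resulting bound is super-polynomially small in $n$, the trivial inequality $\ex\le\binom nr$ upgrades $\E[X_m]\to0$ to $\E[\ex(G^r_{n,p},T^r)]\le(1+o(1))m$. For $3/2<x<4$, take $m=n^{3-\delta}$; using $3r-12+3\delta=3(r-4+\delta)$, the first bound of Theorem~\ref{Containers-r} collapses to $\E[X_m]\le(n^{\,x-4+5\delta/3+o(1)})^m$, so choosing $\delta$ just below $\frac{3(4-x)}{5}$ (which lies in $(0,3/2)$ exactly for $x\in(3/2,4)$) drives $\E[X_m]\to0$ with $m=n^{(3x+3)/5+o(1)}$, giving $f_r(x)\le\frac{3x+3}{5}$. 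For $4<x<r$ (so $r\ge5$), the second bound of Theorem~\ref{Containers-r} with $m=n^{3+\delta}$ and $\delta$ just above $x-4$ gives $\E[X_m]\le(n^{\,x-4-\delta+o(1)})^m\to0$ with $m=n^{x-1+o(1)}$, so $f_r(x)\le x-1$, matching the star bound. The boundary cases $x=r$, and $x=4$ when $r=4$ (so $p=1$), follow at once from $\ex(K^r_n,T^r)=\binom{n-1}{r-1}$; when $r>4$ the value $f_r(4)=3$ instead comes from the second container bound with $\delta\to0^+$.

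Essentially all of the genuine difficulty is already packaged inside Theorem~\ref{Containers-r}, whose proof follows the template of Theorem~\ref{Containers-3} but must invoke the general-$r$ balanced-supersaturation estimate of Lemma~\ref{Saturtation} (with $S=tn^{r-4}$) and iterate the container Lemma~\ref{Contn}; granting that theorem, the present argument is bookkeeping. The one step requiring care is checking that, after optimizing $\delta$, the two container exponents collapse to exactly $\frac{3x+3}{5}$ and $x-1$ and that the optimal $\delta$ stays in the admissible window $(0,3/2)$, respectively $(0,r-4)$; the remaining points---the three boundary values $x\in\{3/2,4,r\}$ and the passage from the a.a.s.\ output of the container method to a bound on $\E[\ex]$---are routine, the latter because the Markov bound is super-polynomially small. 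One also notes that the upper and lower bounds coincide up to $n^{o(1)}$ on $[0,3/2]\cup[4,r]$, so the limit defining $f_r$ genuinely exists there, whereas on the open interval $(3/2,4)$ the argument yields only the stated sandwich (read with $\liminf$ and $\limsup$).
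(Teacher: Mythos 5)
The paper does not give a full proof of Theorem~\ref{T-Mainrplot}; in the concluding remarks it merely attributes each bound to its source (deletion for $x\le 3/2$, Theorem~\ref{Containers-r} for the upper bounds, Corollary~\ref{C-rLow} and the star for the lower bounds). Your proposal reconstructs exactly that argument and fills in the omitted bookkeeping correctly: the computation $\E[R(G^r_{n,p})]=\Theta(n^{3x-3})$ versus $\E[e]=\Theta(n^x)$ on $[0,3/2)$; the substitution of $p=n^{-r+x}$ and $m=n^{3-\delta}$ into the first container bound, which (using $\tfrac{n^{r-1}}{m}=n^{r-4+\delta}$) collapses to exponent $x-4+\tfrac{5\delta}{3}$, optimized at $\delta\to\tfrac{3(4-x)}{5}^-$ to give $m=n^{(3x+3)/5+o(1)}$; the analogous optimization $\delta\to(x-4)^+$ for the second container bound; and the crossing point $x=\tfrac{5r-9}{2(r-2)}$ of the two lower bounds. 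Your upgrade from ``$\E[X_m]$ super-polynomially small'' to a bound on $\E[\ex]$ via $\E[\ex]\le m+n^r\E[X_m]$ is also correct. The one place you go beyond the paper's sketch is the endpoint $x=3/2$: the paper attributes all of $[0,3/2]$ to naive deletion, but at $x=3/2$ one has $\E[R]=\Theta(\E[e])$ and naive deletion could fail depending on the implied constants; your subsampling fix is the right move and is exactly the device used in the paper's proof of the lower bound of Theorem~\ref{T-Main3plot}. Your closing remark that on $(3/2,4)$ only a sandwich is established and the limit defining $f_r$ should be read as $\liminf/\limsup$ is also an honest and accurate observation that the paper elides.
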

    The bounds for $x\le 3/2$ come from deleting an edge from each triangle in $G_{n,p}^{r}$.  For $x>3/2$, the upper bound follow from Theorem~\ref{Containers-r}, the first lower bound follows from Corollary~\ref{C-rLow}, and the second lower bound follows from taking every edge containing a given vertex. 
    \item We believe that the upper bound is perhaps closer to the truth and have the following conjecture.
    \begin{conj}\label{Conj-r}
        For $r\ge 4$ and $0\le x\le r$ a constant, let $p=n^{-r+x}$ and $f_r(x)$ as defined in Theorem~\ref{T-Mainrplot}.  Then for $\f{3}{2}<x\le 4$,
        
        \[f_r(x)=\f{3x+3}{5}\]
    \end{conj}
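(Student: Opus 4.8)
Since the upper bound $f_r(x)\le \frac{3x+3}{5}$ is already contained in Theorem~\ref{T-Mainrplot} (it follows by applying Theorem~\ref{Containers-r} with $m=n^{3-\delta}$, $\delta=\frac{3(4-x)}{5}$, and then Markov's inequality), and since $f_r$ already equals $\frac{3x+3}{5}$ at the two endpoints $x=\frac32$ and $x=4$ by Theorem~\ref{T-Mainrplot}, the conjecture is equivalent to the matching \emph{lower} bound $f_r(x)\ge \frac{3x+3}{5}$ on the open interval $\frac32<x<4$. As $f_r$ is defined through an expectation, it is enough to produce, for $p=n^{-r+x}$, a \emph{random} $T^r$-free subgraph $G'\subseteq G^r_{n,p}$ (a function of $G^r_{n,p}$ together with some auxiliary randomness) with $\E[e(G')]\ge n^{(3x+3)/5-o(1)}$: then $\ex(G^r_{n,p},T^r)\ge e(G')$ holds pointwise and no concentration argument is needed. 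The content of the conjecture is that the available constructions are all too weak --- the random colouring onto the Ruzsa--Szemer\'edi template of Proposition~\ref{P-Alt} and Corollary~\ref{C-rLow} caps at exponent $\frac{x+3r-6}{2r-3}$, and a single star achieves only $x-1$, and both are strictly below $\frac{3x+3}{5}$ throughout $(\frac32,4)$ --- so a genuinely new hybrid construction is needed.

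The construction I would pursue is a semi-random (``nibble'') one inside $G^r_{n,p}$: expose the edges of $G^r_{n,p}$ in a random order and greedily keep an edge $e$ unless it completes a copy of $T^r$ with two previously kept edges (possibly biasing the order toward a structured preference rather than keeping it uniform). The resulting $G'$ is $T^r$-free by construction, and the analysis comes down to tracking, as the process runs, the number of kept edges $M$, the number of \emph{links} --- ordered pairs $(f,g)$ of kept edges with $|f\cap g|=1$, the configurations that are one edge short of a loose triangle --- and the pair-codegrees in $G'$. An incoming edge is rejected exactly when it closes a current link, which should occur for a fraction of the remaining edges proportional (up to factors $n^{o(1)}$) to (number of links)$/e(G^r_{n,p})$. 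A naive ``random-like'' count gives $\Theta(M^2/n)$ links and only pushes $M$ to $n^{3/2}$ --- matching the answer at $x=\frac32$ but nothing beyond --- so the crux is to arrange, via the order in which edges are offered or an additional structural restriction, that $G'$ stays in a regime with far fewer \emph{dangerous} links (as happens near a star, where the third edge of a potential loose triangle is forced out of the star), and to carry the process up to $M=n^{(3x+3)/5-o(1)}$. A second avenue is to enrich the colouring argument: colour $V(G^r_{n,p})$ with $n^{\theta}$ classes using a template that is at once loose-triangle-free, linear, and locally sparse, keep the edges mapped into the template while imposing the analogue of condition~(2) from the proof of Theorem~\ref{T-Lower} so that linearity forces $G'$ to be $T^r$-free, and optimise $\theta$.

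The main obstacle is precisely the construction. There is structural evidence that it is subtle: the exponent $1+\frac{2\delta}{3r-12+3\delta}$ in Theorem~\ref{Containers-r} is \emph{strictly larger than $1$}, whereas Frankl--F\"uredi caps every $T^r$-free $r$-graph at $O(n^{r-1})$ edges, so the $T^r$-free $r$-graphs witnessing the container bound cannot be sub-$r$-graphs of a bounded union of stars; the ``extra'' $\tfrac23\delta m$ bits of entropy they carry have no evident description, and even less evident is whether that entropy can be realised inside a \emph{sparse} random host rather than just counted abstractly. If one instead goes through counting --- first proving $N_r(n,m)\ge\bigl(\tfrac{n^{r-1}}{m}\bigr)^{(1+\frac{2\delta}{3r-12+3\delta})m-o(m)}$, which would match Theorem~\ref{Containers-r}, and then transferring via $\E[X_m]=N_r(n,m)p^m$ where $X_m$ counts $T^r$-free $m$-edge subgraphs of $G^r_{n,p}$ --- the first moment alone is insufficient and one must bound $\E[X_m^2]$; in the sparse regime $p=n^{-r+x}$ this forces a delicate passage to a well-spread sub-family of witnesses to rule out clustering before Chebyshev or a Janson-type inequality can give $\Pr[\ex(G^r_{n,p},T^r)\ge m]\ge n^{-o(1)}$, which is what is needed to conclude.
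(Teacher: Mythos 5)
The statement you were asked to prove is labelled \textbf{Conjecture} \ref{Conj-r} in the paper; the authors explicitly present it as open and offer no proof, only the non-matching bounds of Theorem~\ref{T-Mainrplot}. Your submission is candid about this: you correctly observe that the upper bound $f_r(x)\le\frac{3x+3}{5}$ is already in Theorem~\ref{T-Mainrplot}, that the endpoints $x=\frac32$ and $x=4$ are already settled, and that the content of the conjecture is therefore the matching lower bound on the open interval $\bigl(\frac32,4\bigr)$. That reduction is accurate. But from there your text is a research sketch, not a proof: you propose two constructions (a greedy/nibble process inside $G^r_{n,p}$, and an enriched colouring onto a locally sparse loose-triangle-free linear template), you do not carry out the analysis of either, and you explicitly say the ``crux'' and the ``main obstacle'' remain. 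The nibble route as you describe it is acknowledged to stall at $M\approx n^{3/2}$ without a further idea for controlling links, and you give no candidate template or parameter choice for the colouring route. The second-moment/counting route you mention would also require first establishing a matching lower bound on $N_r(n,m)$, which you leave open, and then a Janson-type anti-concentration argument you do not supply.

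So there is a genuine gap --- in fact the entire argument is missing, which is unsurprising since the paper's own authors could not close it. What you have written is a reasonable framing of the problem and a plausible (if pessimistic) survey of avenues, and it would be honest to present it as such rather than as a proof proposal. One small point worth keeping: your observation that the container exponent $1+\frac{2\delta}{3r-12+3\delta}>1$ means the witnesses to $N_r(n,m)$ cannot simply live inside a bounded union of stars is a good heuristic reason to doubt that star-type lower bound constructions can be pushed to $\frac{3x+3}{5}$, and it is consistent with the paper's own remark that the truth may sit strictly between the current bounds.
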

    \item For the deterministic case, we note that one can extend the proof of Theorem~\ref{T-Lower} to other $F$ by defining maps $\chi:V(G)\to V(H)$ for suitable $H$. In this case a second step must be done to effectively bound $\ex(G,F)$.  We plan to do this in a followup paper.
\end{itemize}

\end{document}